\documentclass[12pt]{amsart}

\usepackage{diagbox}

\usepackage{geometry}
\geometry{margin=1.2 in}

\usepackage{amsmath}
\usepackage{amssymb}
\usepackage{amsthm}
\usepackage{amsfonts, dsfont}
\usepackage{mathrsfs} 
\usepackage{paralist}
\usepackage{graphics} 
\usepackage{epsfig} 
\usepackage{graphicx}  
\usepackage{epstopdf}
\usepackage{epstopdf}
\usepackage{verbatim}
\epstopdfsetup{update}
\usepackage{mathrsfs}
\usepackage{mathtools}
\usepackage{pstricks}
\usepackage{relsize}
\usepackage{tikz}
\usetikzlibrary{matrix}
\usepackage{subcaption}
\usepackage{pgfplots}
\usepackage{fixltx2e}
\usepackage{enumitem}
\usepackage{bm}
\usepackage{upgreek}
\usepackage{url}
\usepackage[colorlinks=true]{hyperref}
\hypersetup{urlcolor=blue, linkcolor=blue, citecolor=red}
\usepackage{cleveref}
\usepackage{bm}
\usepackage{appendix}
\usepackage{float}

\usepackage{tcolorbox} 
\usepackage{lineno}

\usepackage{algorithm2e}
\SetKwComment{Comment}{/* }{ */}

\allowdisplaybreaks

\parskip 8pt

\newcommand{\D}[1]{\mbox{\rm #1}} 
\newcommand{\dd}{\D{d}}
\newcommand{\dt}{{\rm d} t}
\newcommand{\ds}{{\rm d} s}
\newcommand{\dx}{{\rm d} x}
\newcommand{\rd}{{\rm d} }
\newcommand{\RR}{{\mathbb R}}
\newcommand{\EE}{{\mathbb E}}

\newcommand{\OX}{{\overline{X}}}

\newcommand{\mc}[1]{\mathcal{#1}}

\definecolor{ao(english)}{rgb}{0.0, 0.5, 0.0}

\DeclareMathOperator*{\argmin}{argmin}

\usepackage[abbrev]{amsrefs}

\usepackage{amssymb}

\numberwithin{equation}{section}



\newtheorem{theorem}{Theorem}[section]
\newtheorem{lemma}[theorem]{Lemma}
\newtheorem{assum}[theorem]{Assumption}
\newtheorem{corollary}[theorem]{Corollary}

\newtheorem{remark}[theorem]{Remark}

\definecolor{ForestGreen}{RGB}{34,139,34}
\definecolor{ao(english)}{rgb}{0.0, 0.5, 0.0}

\begin{document}

\title[Global convergence for the rescaled CBO]
{Faithful global convergence for the rescaled Consensus--Based Optimization}
\thanks{
}

\author{Hui Huang}

\author{Hicham Kouhkouh}

\author{Lukang Sun}

\address{Hui Huang \newline
Hunan University, 
School of Mathematics, Changsha, China
}
\email{\texttt{huihuang1@hnu.edu.cn}}

\address{Hicham Kouhkouh \newline 
University of Graz, 
Department of Mathematics and Scientific Computing -- NAWI, 
Graz, Austria
}
\email{\texttt{hicham.kouhkouh@uni-graz.at}}

 \address{
Lukang Sun \newline
Technical University of Munich, 
School of Computation, Information and Technology,
Department of Mathematics, Munich, Germany
 }
 \email{\texttt{lukang.sun@tum.de}}

\date{\today}

\begin{abstract}
We analyze the Consensus-Based Optimization (CBO) algorithm with a consensus point rescaled by a  fixed parameter $\kappa \in (0,1)$. Under minimal assumptions on the objective function and the initial data, we establish its unconditional convergence to the global minimizer. Our results hold in the asymptotic regime where both the time--horizon $t \to \infty$ and the inverse--temperature $\alpha \to \infty$,  providing a rigorous theoretical foundation for the algorithm's global convergence. Furthermore, our findings extend to the case of multiple and non--discrete set of minimizers.  
\end{abstract}

\subjclass[MSC]{65C35, 90C26, 90C56}

\keywords{Consensus-Based Optimization; Laplace's principle; global optimization}

\maketitle

\vspace*{-1.6em}

\section{Introduction}

Consensus--Based Optimization (CBO) is a derivative--free algorithm designed to tackle large--scale, highly non--convex optimization problems. This algorithm and its analytical foundations were established in \cite{pinnau2017consensus,carrillo2018analytical}.  It has since then been extensively studied, both theoretically and practically, demonstrating its effectiveness in solving complex optimization challenges that arise in real--world applications. 

Inspired by the collective behavior observed in social and biological systems, CBO exploits  the interactions of multiple agents, or particles, to explore the search space in a coordinated and efficient manner. The method uses a stochastic consensus mechanism, where particles are dynamically guided toward regions of lower objective values, ultimately converging to an optimal solution. This decentralized and adaptive search strategy makes CBO particularly well-suited for problems where gradient information is unavailable or unreliable.
Motivated by its broad applicability, researchers have extended and refined the original CBO framework to accommodate various settings and demonstrated strong empirical success in solving wide range of global optimization problems
\cite{fornasier2021JMLR, ha2022stochastic,borghi2022consensus,herty2025multiscale,carrillo2022consensus,carrillo2021consensus,fornasier2022anisotropic,fornasier2020consensus,fornasier2024truncated,byeon2025consensus,chen2022consensus,wei2025consensus,bonandin2025consensus,bellavia2025discrete,cipriani2022zero,huang2024fast,choi2025modified}. These advancements have further enhanced the algorithm's robustness and efficiency, making it a versatile tool across a wide range of scientific and engineering disciplines. 
Typical convergence result of CBO establishes that under suitable assumptions on the objective function (e.g., continuity, isolated global minimizers, and a “localization” condition such as the so-called “well-behaved” landscape around the global minimum), the mean-field limit of CBO approximates a Dirac mass concentrated at the global minimizer when $t$ and $\alpha$ are sufficiently large. This is a probabilistic and asymptotic guarantee in the mean-field regime, not a worst-case computational complexity statement.

One of the key contributions of this manuscript is to establish its convergence, thus providing further theoretical validation of the method.

Let us consider  the following optimization problem:
\begin{equation*}
	\text{Find }\; x_* \in \argmin_{x\in \RR^d} f(x),\,
\end{equation*}
where $f$ can be a non-convex non-smooth objective function that one wishes to minimize. Then the CBO dynamic considers the following system of $N$ interacting particles, denoted as $\{X_{\cdot}^i\}_{i=1}^N$, which satisfies
\begin{equation}\label{CBOparticle}
	\rd X_t^i=-\lambda(X_t^i - \mathfrak{m}_\alpha(\rho_t^{N})) \, \dt+\sigma D(X_t^i-\mathfrak{m}_\alpha(\rho_t^{N})) \, \rd B_t^{i},\quad i=1,\dots, N=:[N]\,,
\end{equation}
where  $\sigma>0$ is a real constant, $\rho_t^N:=\frac{1}{N}\sum_{i=1}^N\delta_{X_t^i}$ is the empirical measure associated to the particle system, and $\{B_.^i\}_{i=1}^N$ are $N$ independent $d$-dimensional Brownian motions. Moreover, the particle system is initialized with independent and identically distributed (i.i.d.) data $\{X_{0}^i\}_{i=1}^N$, where each $X_0^i$ is distributed according to a given measure $\rho_0 \in \mathscr{P}(\RR^d)$ for all $i \in [N]$.

Here, we employ the anisotropic diffusion in the sense that $D(X):=\mbox{diag}(|X_1|,\dots,|X_d|)$ for any $X\in\RR^d$, which has been proven to handle high-dimensional problems more effectively \cite{carrillo2021consensus,fornasier2022anisotropic}.
The \textit{current global consensus} point $\mathfrak{m}_{\alpha}(\rho_t^N)$ is meant to approximate the global minimum, and is defined with a weighted average of the particles as 
\begin{equation}\label{XaN}
	\mathfrak{m}_{\alpha}(\rho_t^N): = \frac{\int_{\RR^d} x \, \omega_{\alpha}^{f}(x)\; \rho_t^N(\dx)}{\int_{\RR^d}\omega_{\alpha}^{f}(x)\; \rho_t^N(\dx)}\,, \quad \text{ where } \quad \omega_\alpha^f(x):=\exp(-\alpha f(x)).
\end{equation}
This choice of weight function $\omega_\alpha^f(\cdot)$ is motivated by the well-known Laplace's principle \cite{miller2006applied,MR2571413}, see also \cite[Appendix A.2]{huang2024consensus}. Formally, this means that
\begin{equation}
    \mathfrak{m}_{\alpha}(\rho_t^N) \;\approx\; \argmin_{1 \leq i \leq N} f(X_t^i) \quad \text{as } \alpha \to \infty.
\end{equation}
In other words, as $\alpha \to \infty$, the consensus point $\mathfrak{m}_{\alpha}(\rho_t^N)$ converges to the position of the particle with the smallest objective value at time $t$, i.e., the best location found by the ensemble $\{X_t^i\}_{i=1}^N$ at time $t$. Once consensus is reached, all particles concentrate around the consensus point $\mathfrak{m}_{\alpha}(\rho_t^N)$, in the sense that
\begin{equation}
    X_{t=\infty}^i \;\approx\; \mathfrak{m}_{\alpha}(\rho_{t=\infty}^N) \quad \text{for all } i \in \{1,\dots,N\}.
\end{equation}
Consequently, provided the number of particles $N$ is sufficiently large and $\alpha \to \infty$, the ensemble concentrates near the global minimizer $x_*$. We refer to \cite[Section 1.2]{huang2025uniform} for more details.

The convergence proof of the CBO method is typically conducted in the context of large-particle limit \cite{gerber2023mean,huang2022mean}. Specifically, rather than analyzing the $N$--particle system \eqref{CBOparticle} directly, one considers the limit as $N$ approaches infinity and examines the corresponding McKean--Vlasov process $\OX_{\cdot}$, which is governed by the following equation:
\begin{equation}\label{CBO}
\begin{aligned}
    \rd \OX_t  = &  -\lambda\left(\OX_t-\mathfrak{m}_{\alpha}(\rho^{\alpha}_t)\right) \, \dt \, + \, \sigma D\left(\OX_t-\mathfrak{m}_{\alpha}(\rho^{\alpha}_t)\right) \, \rd B_t\,, \\ 
    & \text{with } \; \rho_t^\alpha \, := \, \text{Law}(\OX_t). 
\end{aligned}
\end{equation}

\subsection{The convergence problem in the literature}

 There are numerous results in the literature on the convergence analysis of CBO methods, which can be broadly categorized into two different approaches. For instance, in \cite{fornasier2024consensus}, the authors employ $\mathbb{E}[|\overline{X}_t - x_*|^2]$ as a Lyapunov functional for \eqref{CBO}. Their convergence result states that for any given accuracy $\varepsilon > 0$, there exists a time $T_* > 0$ depending on $\varepsilon$, such that for $\alpha > \alpha_0$ (also depending on $\varepsilon$), the following holds:
\begin{equation*}
    \mathbb{E}\left[|\overline{X}_t - x_*|^2\right] \leq \mathbb{E}\left[|\overline{X}_0 - x_*|^2\right] e^{-\theta t}
\end{equation*}
for some constant $\theta > 0$ and for all $t \in [0, T_*]$. Moreover, it holds that 
\[
\mathbb{E}\left[|\overline{X}_{T_*} - x_*|^2\right] \leq \varepsilon.
\]
This approach has been further extended to establish the convergence of various CBO variants, including those incorporating general constraints \cite{borghi2023constrained, beddrich2024constrained}, multi-player games \cite{chenchene2025consensus}, min-max problems \cite{borghi2024particle}, multiple-minimizer problems \cite{fornasier2025pde},  clustered federated learning \cite{carrillo2023fedcbo}, and bi-level optimization \cite{trillos2024cb}. 

However, a limitation of this method is that it does not allow for taking the limits $t \to \infty$ or $\alpha \to \infty$, meaning that convergence is guaranteed only within a neighborhood of $x_*$ with accuracy $\varepsilon$. 
More precisely, the proof of this method relies on a quantitative version of Laplace’s principle, which requires that $\rho_0(B_{r_\varepsilon}(x^*)) > 0$. However, as $\varepsilon \to 0$, the radius $r_\varepsilon \to 0$, and consequently $\rho_0(B_{r_\varepsilon}(x^*)) \to 0$, leading to a degeneracy that violates this condition.

One possible approach to achieving convergence as $t \to \infty$ is through a variance-based method described in \cite{carrillo2018analytical}. This approach involves defining the variance function
\[
V(t) = \mathbb{E}\left[|\overline{X}_t - \mathbb{E}[\overline{X}_t]|^2\right].
\]
The proof is then divided into two steps. First, it is shown that under well-prepared initial data and appropriately chosen parameters, for any fixed $\alpha$, the function $V(t)$ converges to zero exponentially fast as $t \to \infty$. This implies that $\overline{X}_t$ converges to some consensus point $x^\alpha$ as $t \to \infty$. Second, it is verified that the consensus point $x^\alpha$ can be made arbitrarily close (up to any given accuracy $\varepsilon$) to the global minimizer $x_*$ by choosing some fixed $\alpha$ that is sufficiently large.  This argument has been further extended to establish the convergence of such as particle swarm optimization \cite{huang2023global}, CBO constrained on the sphere \cite{fornasier2021JMLR}, CBO for saddle point problems \cite{huang2024consensus}, and CBO with mirror descent \cite{bungert2025mirrorcbo}. A similar idea is also employed in the convergence proof at the particle level \cite{ko2022convergence}. 
However, similar to the first method, this approach does not allow taking the limit $\alpha\to \infty $ and it only guarantees convergence within an $\varepsilon$--neighborhood of the global minimizer. More precisely, this method requires that the initial variance satisfies $V(0) \leq \frac{C}{\alpha}$ and that the initial distribution contains the global minimizer $x_*$. Consequently, as $\alpha \to \infty$, the condition $V(0) \leq \frac{C}{\alpha}$ forces the initial distribution to concentrate at $x_*$ already.

In this paper, we will establish the first \textit{true} convergence result for CBO method, demonstrating that both limits, $t \to \infty$ and $\alpha \to \infty$, can be taken. Up to our knowledge, this is the first result of this kind. The main idea relies on the following.

\subsection{Motivation and the \textit{rescaled}  CBO}

As it has been observed in \cite{huang2024self}, see also \cite[Remark 1.1]{huang2025uniform}, any Dirac measure is invariant for the dynamics \eqref{CBO} which makes it difficult to study its asymptotic properties. This non--uniqueness of an invariant probability measure is mainly due to two reasons: in the drift, the term $-X$ is not strong enough with respect to the
mean--field term $\mathfrak{m}_{\alpha}(\cdot)$, and the diffusion degenerates. In practice, in such situations, the particle population are often observed to collapse prematurely to a Dirac delta distribution before the global minimizer has been reached (see Figure \ref{fig:Rastrigin}) due to the global minimizer not being contained in the initial distribution. Moreover, on the theoretical side, while convergence in mean-field law over finite time $t<T$  and finite $\alpha$ has been achieved in the literature (as discussed in Section 1.1), proving global convergence over an infinite time horizon has remained elusive.
\begin{figure}[H]
	\centering
    \includegraphics[width=0.45\textwidth]{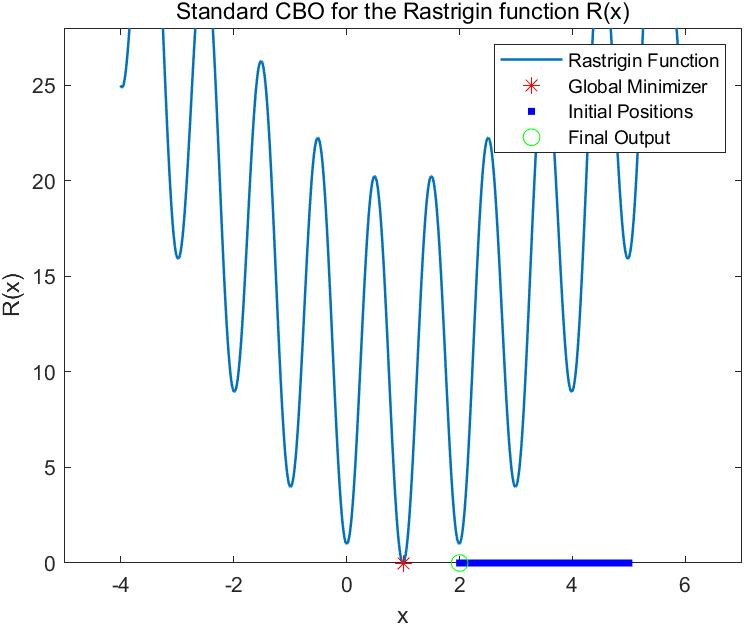} \; 
    \includegraphics[width=0.45\textwidth]{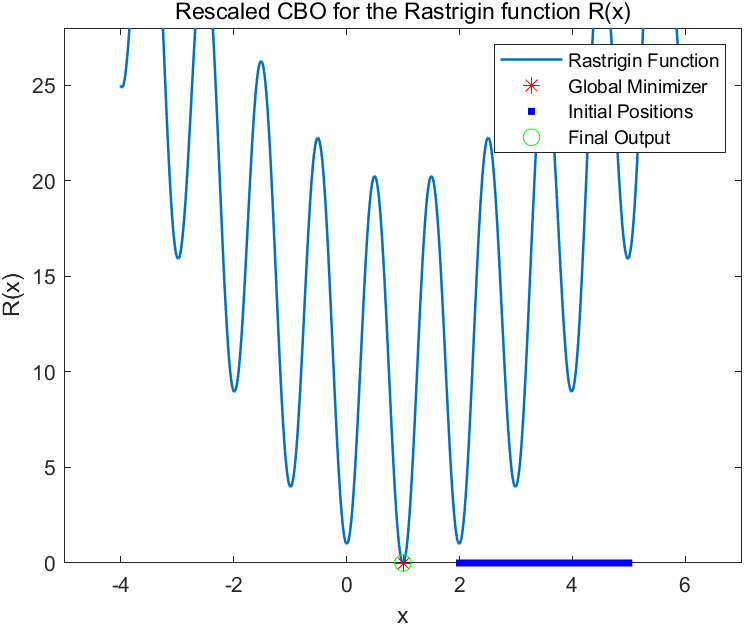}
	\caption{We apply the \textit{standard} CBO ($\kappa=1$ and $\delta=0$) and the \textit{rescaled} CBO ($\kappa=0.01$ and $\delta=5$) particle system \eqref{CBOkappa particle} to the Rastrigin function $R(x):=10+(x-1)^2-10\cos(2\pi(x-1))$, which has a unique global minimizer $x_*=1$ (the red star). The initial particles (the blue dots) are sampled uniformly in $[2,5]$ (it does not contain $x_*$). The simulation parameters are $N=100,\lambda=1,\sigma=0.5,\alpha=10^{15},\rd t=0.01$ and $T=100$. The final output is $\mathfrak{m}_{\alpha}(\rho_{T}^N)$ (the green circle). \\
    \textbf{Left:} The \textit{standard} CBO collapses prematurely to a local minimizer. \\
    \textbf{Right:} The \textit{rescaled} CBO finds the global minimizer.
    } 
	\label{fig:Rastrigin}
\end{figure} 
\noindent This is the main reason behind our modification of \eqref{CBO} whose aim is to address exactly these two mentioned issues, and which has already been proposed in \cite{huang2025uniform, huang2024self, herty2025multiscale}. 
Hence, we consider a \textit{rescaled} CBO given by
\begin{equation}\label{CBO kappa}
\begin{aligned}
    \rd \OX_t = & -\lambda\left(\OX_t - \kappa\,\mathfrak{m}_{\alpha}(\rho^{\alpha}_t)\right)\,\dt \,+ \,\sigma\left(\delta\,\mathds{I}_{d} + D\left(\OX_t - \kappa\, \mathfrak{m}_{\alpha}(\rho^{\alpha}_t)\right) \right)\,\rd B_t\\
    & \text{with } \; \rho^{\alpha}_t \, := \, \text{Law}(\OX_t). 
\end{aligned}
\end{equation}
complemented with an initial condition $\OX_{0}\sim \rho_0\in\mathscr{P}_{4}(\RR^d)$, where $\mathds{I}_{d}$ is the $d$-dimensional identity matrix, $0<\kappa<1$ is a small positive constant, and $\delta> 0$ is any positive constant. The name is justified by the fact that the contribution of the mean--field term $\mathfrak{m}_{\alpha}(\cdot)$ is re-\textit{scaled} with a parameter $\kappa$ when compared to the contribution of $-\OX$, thus promoting \textit{dissipation} ($-\OX$) over \textit{interaction} ($\mathfrak{m}_{\alpha}(\cdot)$). To implement the model numerically, one considers the  underlying system of $N$ interacting particles which is of the form
 \begin{equation}\label{CBOkappa particle}
\rd X_t^i =-\lambda \left(X_t^i - \kappa\,\mathfrak{m}_{\alpha}(\rho_t^N) \right) \dt+\sigma\left(\delta\,\mathds{I}_{d} + D(X_t^i - \kappa\, \mathfrak{m}_{\alpha}(\rho_t^N) \right)\rd B_t^i,\quad i\in [N]\,
\end{equation}
and $\rho_t^N:=\frac{1}{N}\sum_{i=1}^N\delta_{X_t^i}$ is the empirical measure associated to the particle system.

Introducing the parameters $\kappa$ and $\delta$ makes all Dirac measures not invariant any more, and intuitively, creates a disconnection between the limits in the time variable $t\to \infty$ and the inverse--temperature $\alpha \to \infty$, hence allowing to easily handle the convergence of the dynamics towards the (to--be) global minimum. We refer to \cite[Section 1.2]{huang2025uniform} for more details.

Moreover, the positive parameter $\delta\,$ guarantees ellipticity (non--degeneracy) of the PDE satisfied by the invariant probability measure of the dynamics, and grants it nice properties. More precisely, we need $\delta> 0$ in order to guarantee existence of an invariant probability measure (Theorem \ref{thm: existence} below), and to ensure suitable regularity (Theorem \ref{thm general} below). Uniqueness then follows using the $\mathbb{W}_{2}$--contraction (Theorem \ref{thm:long time} below). Its value however does not influence the asymptotic analysis that we shall perform in the sequel, or the results that we aim for. Indeed, as we will later see in \eqref{eq:to take expectation}, the constant $\delta$ appears in a martingale term which will vanish after taking the expectation, hence its value will not influence our convergence result as long as it is positive.

To further motivate the introduction of the scaling parameter $\kappa\in(0,1)$ and the non-degenerate diffusion $\delta>0$, we present some numerical experiments in Table \ref{table}.

\begin{table}[H]
\centering
\begin{tabular}{p{4cm} p{8cm}}
\hline
\textbf{Choices of $\kappa$ and $\delta$} & \textbf{Final Output} \\
\hline
$\kappa=1$ and $\delta=0$ & The standard CBO gets trapped at a local minimizer (see Figure~\ref{fig:Rastrigin}-(Left)).\\
$\kappa=1$ and $\delta=5$ & The CBO with non-degenerate diffusion successfully locates the global minimizer. \\
$\kappa=0.99$ and $\delta=0$ & The rescaled CBO converges to the global minimizer.\\
 $\kappa=0.01$ and $\delta=5$ & The rescaled CBO converges to the global minimizer (see Figure~\ref{fig:Rastrigin}-(Right)).\\
\hline
\end{tabular}
\caption{We apply the CBO dynamics~\eqref{CBOkappa particle} to the Rastrigin function $R(x)$ with different choices of $\kappa$ and $\delta$.}\label{table}
\end{table}

The advantage of the rescaled CBO \eqref{CBO kappa} manifests already in several contexts: 
\begin{itemize}
    \item In \cite{herty2025multiscale}, it ensures the validity of an \textit{averaging principle} which reduces a singularly perturbed (or, multi--scaled) dynamics of the form \eqref{CBO kappa}, into an averaged one. This can be seen as a model reduction technique for solving multi--level optimization problems using multiple populations of interacting particles. 
    \item In \cite{huang2024self}, it allows to prove existence and uniqueness of an invariant probability measure $\rho_*^\alpha$, 
    and to establish the long--time behavior of its law. Moreover, its approximation with empirical measures has been proven with an explicit rate of convergence, leading to the so-called \textit{Self--interacting CBO}. 
    \item In \cite{huang2025uniform}, it helped to guarantee a uniform--in--time propagation of chaos for the population of particles evolving along the dynamics \eqref{CBOkappa particle}. This property is crucial for ensuring a stable and reliable long--term convergence, which secure the practical effectiveness of CBO methods. 
\end{itemize}

The present manuscript is in the continuation of the former work \cite{huang2024self,huang2025uniform}, where the proof of global convergence is absent. Our main contributions are discussed next.
\vspace*{-0.6em}

\subsection{Highlights} \label{Highlights} 

The three main features of the present manuscript are as follows. 
\begin{itemize}
\item \textbf{Asymptotic behavior when $t\to\infty$ and $\alpha \to\infty$:} We obtain the first \textit{true} convergence result in the literature in the sense that both limits, $t \to \infty$ and $\alpha \to \infty$, can be taken. More specifically, we establish the asymptotic behavior of the unique invariant ($t \to \infty$) probability measure $\rho_*^\alpha$ of \eqref{CBO kappa} as $\alpha \to \infty$. 
More precisely, when the function $f(\cdot)$ to be minimized has a unique global minimizer $x_*$, we prove that  $\mathbb{W}_{2}(\rho^{\alpha}_{t}, \rho^{\alpha}_{*})^{2} \longrightarrow 0$ when $t\to \infty$, and
\begin{equation*}
    \lim_{\alpha\to \infty}\lim_{t\to \infty}\; \mathfrak{m}_{\alpha}(\rho^{\alpha}_{t})
    =
    \lim_{\alpha\to \infty}\lim_{t\to \infty} \; \kappa^{-1}\,\EE\left[\,\OX_t\,\right]=  x_*\,,
\end{equation*}
where $\OX_{\cdot}$ satisfies \eqref{CBO kappa}.
    \item \textbf{Unconditional global convergence:} We are able to bypass classical assumptions in the literature. 
    \begin{itemize}
        \item 
        Our results hold without any prior assumptions on the initial distribution $\rho_0$ of \eqref{CBO kappa} or its corresponding interacting particles system \eqref{CBOkappa particle}, provided it has sufficiently many finite moments. In particular, we do not assume that a global minimizer of $f(\cdot)$ is contained within the support of $\rho_0$, a crucial assumption in both \cite{carrillo2018analytical} and \cite{fornasier2024consensus}. Very recently, \cite{fornasier2025regularity} demonstrated that this assumption can actually be removed by instead requiring regular initial data $\rho_0 \in H^m(\mathbb{R}^d)$. 
        Or in \cite{huang2024fast}, a feedback control is used to drive the particles nearby the minimizer.
        In contrast, our approach does not use control and does not impose any regularity conditions on $\rho_0$, thereby allowing cases where $\rho_0$ is a Dirac measure, as has been considered in the self--interacting CBO model \cite{huang2024self}.
        \item No regularity assumptions on the function $f(\cdot)$ are needed, besides being positive, locally Lipschitz continuous, and with a quadratic growth (see Assumption \ref{assum1}). These assumptions were used in \cite{carrillo2018analytical} to guarantee the well-posedness of the CBO dynamics \eqref{CBO}. To establish global convergence, \cite[Assumption 4.1]{carrillo2018analytical} imposes additional conditions, such as $f \in \mathcal{C}^2(\mathbb{R}^d)$ and the boundedness of $\|\nabla^2 f\|_\infty$. In the framework of \cite[Definition 8]{fornasier2024consensus}, some additional local growth conditions on $f$ around the global minimizer are required. For our rescaled CBO we do not require these additional assumptions for the convergence proof.
    \end{itemize}
    \item \textbf{Multiple--minimizer case:} Although our rescaled CBO is not specifically designed to address the multiple minimizer problem, we can still establish its asymptotic properties in the case where the objective function $f(\cdot)$ has multiple non-trivial minimizers, whether they are discrete points or compact subsets (see Theorem \ref{thm:conver3}). 
    More precisely, denoting $\mathcal{M}$ the set of minimizers, we prove
    \begin{equation*}
    \begin{aligned}
        & \lim\limits_{\alpha\to \infty}\,\lim\limits_{t\to \infty}  \operatorname{dist}(\mathfrak{m}_{\alpha}(\rho^{\alpha}_{t}), \,\mathcal{M}) 
        =\lim\limits_{\alpha\to \infty} \, \lim\limits_{t\to\infty} \operatorname{dist}\!\left( \kappa^{-1}\,\EE\left[\,\OX_t\,\right],\,\mathcal{M}\right)
        = 0.
    \end{aligned}
    \end{equation*}
    Let us mention here that specially designed CBO models aimed at addressing multiple-minimizer problems have been introduced in \cite{fornasier2025pde, bungert2022polarized}.
\end{itemize}
\vspace*{-1em}

\subsection{Strategy and organization}\label{strategy}

The proof of our main global convergence result can be divided into three steps as described in the following. 

\underline{\textit{Step 1.}}
\begin{equation*}
\!\!\!\!\!\!\!\left.
\begin{aligned}
    & \text{\textbullet \; Uniform in $(t,\alpha)$ moment bound (Theorem \ref{thm: finite moments})}\\
    & \text{\textbullet \; Existence of invariant measure (Theorem \ref{thm: existence})} \\
    & \text{\textbullet \; Long-time limit of law + uniqueness (Theorem \ref{thm:long time})}
\end{aligned}
\; \right\}
\; \Rightarrow \;
\substack{\bm{(*)}\;                \text{\normalsize Invariant measure has uniform}\\ 
    \text{\normalsize in $\alpha$ second moment bound}\\
    \text{\normalsize (Theorem \ref{thm: inv unif bound})} }
\end{equation*}
\vspace*{-1em}

\underline{\textit{Step 2.}}
\begin{equation*}
\left.
\begin{aligned}
    & \substack{\bm{(*)}\; \text{\normalsize  Invariant measure has uniform}\\ 
    \text{\normalsize in $\alpha$ second moment bound}\\
    \text{\normalsize (Theorem \ref{thm: inv unif bound})} }\\
    & \\
    & \substack{\text{\normalsize \textbullet \; Invariant measure satisfies}\\ 
    \text{\normalsize Harnack inequality}\\
    \text{\normalsize (Corollary \ref{cor: Harnack})}} 
\end{aligned}
\; \right\}
\; \Rightarrow \;
\substack{\bm{(**)}\; \text{\normalsize Invariant measure has uniform}\\ 
    \text{\normalsize in $\alpha$ positive lower bound}\\
    \text{\normalsize (Lemma \ref{lem:mass})}}
\end{equation*}
\vspace*{-1em}

\underline{\textit{Step 3.}}
\begin{equation*}
\!\!\!\!\!\!\!\!\left.
\begin{aligned}
\substack{\bm{(**)}\; \text{\normalsize Invariant measure has uniform}\\ 
    \text{\normalsize in $\alpha$ positive lower bound}\\
    \text{\normalsize (Lemma \ref{lem:mass})}}
    \Rightarrow \;
&\substack{\text{\normalsize\; Laplace principle/concentration}\\
\text{\normalsize (Theorem \ref{thm:conver} \& Theorem \ref{thm:conver3}) }}\\
&\substack{\text{\normalsize \textbullet \; Existence of invariant measure}\\
\text{\normalsize (Theorem \ref{thm: existence})}}
\end{aligned}
\right\}
\Rightarrow \substack{\text{\normalsize \textbf{Convergence}}\\
\text{\normalsize (Corollary \ref{cor: conv to min}}\\ \text{\normalsize \& Corollary \ref{cor: conv multi})}}
\end{equation*}

Our paper is organized as follows. We start in \textbf{Section \ref{sec:useful}} by setting our main assumptions and providing some estimates, in particular on the moment bound. Then \textbf{Section \ref{sec: invariant}} shall be devoted to the invariant measure. Therein, we shall prove its existence, the long-time limit ($t\to \infty$), and other of its important properties: uniform-in-$\alpha$ second moment and Harnack's inequality. Subsequently, the limit $\alpha\to \infty$ will be tackled in \textbf{Section \ref{sec: convergence}}. In particular, we shall prove a Laplace's principle, then the global convergence towards the minimizer. An analogous convergence result will also be provided in the case of multiple non-trivial minimizers. \textbf{Section \ref{sec: num}} contains some numerical experiments and discussion regarding our model and results. 
Finally, a summary of our findings is in \textbf{Section \ref{sec: conclusion}} at the end of the manuscript, together with future research directions. Our paper concludes with \textbf{Appendix \ref{app: existence}} which contains detailed computations for verifying Theorem \ref{thm: existence}.

\section{Useful estimates}\label{sec:useful}

We start by recalling some notation. 
When a measure $\mu$ has a density $\varrho$ with respect to Lebesgue measure that we denote by $\dd x$, then $\mu$ is absolutely continuous with respect to $\text{d}x$, we write $\mu \ll \dd x$ and $\varrho=\frac{\text{d}\mu}{\text{d}x}$  is Radon--Nikodym derivative of $\mu$ with respect to $\dd x$. Unless a confusion arises, we shall use the same notation for a measure and for its density (when it exists). 
We shall also denote by $W^{k,p}(\Omega)$, $p\geq 1, k\geq 0$ the standard Sobolev space of functions whose generalized derivatives up to order $k$ are in $L^{p}(\Omega)$, and we denote by $W^{k,p}_{loc}(\Omega)$ the class of functions $f$ such that $\chi f\in W^{k,p}(\Omega)$ for each $\chi \in \mc{C}_c^\infty (\Omega)$ the class of infinitely differentiable functions with compact support in $\Omega$.

In what follows, $\|\cdot\|$ denotes the Frobenius norm of a matrix and $|\cdot|$ is the standard Euclidean norm in $\RR^d$; $\mathscr{P}(\RR^d)$ denotes the space of probability measures on $\RR^d$, and $\mathscr{P}_p(\RR^d)$ with $p\geq 1$ contains all $\mu\in \mathscr{P}(\RR^d)$ such that $\mu(|\cdot|^p):=\int_{\RR^d}|x|^p\mu(\dx)<\infty$; it is equipped with $p$--Wasserstein distance $\mathbb{W}_p(\cdot,\cdot)$. 

Our main assumption concerns the function $f(\cdot)$ to be minimized, and can be stated as follows.  

\begin{assum}\label{assum1}
	We assume the following properties for the objective function.
	\begin{enumerate}
		\item $f: \RR^d\to \RR$ is bounded from below by $\underline f := \min f$, and there exists some constant $L_{f}>0$ such that
		\begin{equation*}
			|f(x)-f(y)|\leq L_f(1+|x|+|y|)|x-y|, \quad \forall x,y\in \RR^d\,.
		\end{equation*}
		\item There exist constants $c_\ell,c_u>0$  and $M>0$ such that
		\begin{equation*}
		 f(x)-\underline f\leq c_u(|x|^2+1),\quad \forall x\in \RR^d \; \mbox{ and } \quad c_\ell|x|^2\leq f(x)-\underline f \;\mbox{ for }|x|\geq M\,.
		\end{equation*}
	\end{enumerate}
\end{assum}

We note that the bound $f(x)-\underline f\leq c_u(|x|^2+1)$ is an immediate consequence of the Lipschitz growth condition in assumption (1). We retain it here to make the dependence on the constant $c_u$ explicit in the subsequent results. 

\begin{remark}
The conditions described in Assumption \ref{assum1} are satisfied in practice by a wide range of functions. 
\begin{itemize}
    \item Local Lipschitz growth:   
        This condition is quite standard and guarantees, in particular, the well-posedness of the dynamics (the associated SDE).
    
        \item Quadratic growth outside a ball:   
        This assumption is satisfied by essentially any objective function whose minimizer (or minimizers) lie within a known bounded region. In practice, optimization procedures often restrict the search domain to a closed ball. One may therefore truncate the objective function on this ball and extend it with a quadratic term outside, e.g. $f(x) \leftarrow f(x)\mathds{1}_{B}(x) + \bigl(|x|^{2} + C\bigr)\mathds{1}_{B^{c}}(x)$, for some appropriate constant $C$. With this modification, all polynomial objective functions, among many others, fall within our framework.
\end{itemize}
\end{remark}

The mean--field term $\mathfrak{m}_{\alpha}(\mu)$ plays a crucial role in the behavior of the dynamics. Its properties are essential for our analysis to be carried out. Thus we shall recall an estimate on $\mathfrak{m}_{\alpha}(\mu)$ that we borrow from \cite[Lemma 3.3]{carrillo2018analytical}.

\begin{lemma}\label{lem: useful estimates}
Let $f(\cdot)$ satisfy Assumption \ref{assum1}, then it holds that
\begin{equation}\label{lemeq2}
    |\mathfrak{m}_{\alpha}(\nu)|^2\leq \frac{\int_{\RR^d}|x|^2e^{-\alpha f(x)}\nu(\dd x)}{\int_{\RR^d}e^{-\alpha f(x)}\nu(\dd x)}  \leq b_1+b_2\int_{\RR^d}|x|^2 \dd\nu \quad \forall \nu\in \mathscr{P}_2(\mathbb{R}^d)\,,
\end{equation}
where 
\begin{equation}\label{constant b1 b2}
    b_1=M^2+b_2 \quad \text{ and } \quad b_2 = 2\,\frac{c_u}{c_\ell}\left(1+\frac{1}{\alpha \, M^2 \, c_\ell}\right).
\end{equation}
\end{lemma}

\begin{remark}
    In fact, as it has been already observed in \cite[Remark.1]{carrillo2018analytical}, the constant $b_2$ can be chosen to be independent of $\alpha$ for $\alpha\geq 1$.
\end{remark}

\begin{lemma}\label{thmexist}  
	Let $f(\cdot)$ satisfy Assumption \ref{assum1} and $\rho_0\in\mathscr{P}_{p}(\RR^d)$ for any $p\geq 2$, then the dynamics \eqref{CBO kappa} has a unique strong solution and satisfies
	\begin{equation}\label{mbound}
       \sup_{t\in[0,T]} \,\EE\left[\,|\OX_t|^{p}\,\right]<\infty \quad \quad \forall\, 0<T < \infty.
	\end{equation}
\end{lemma}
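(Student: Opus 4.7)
The argument is a close adaptation of the well-posedness theory developed in \cite{carrillo2018analytical} for the classical CBO, with only cosmetic modifications to accommodate the $\kappa$-rescaling of the consensus point and the additive term $\delta\,\mathds{I}_d$ in the diffusion. I would proceed in three steps: local existence and uniqueness for the particle system \eqref{CBOkappa particle}, an a priori moment estimate that precludes finite-time explosion, and a Wasserstein fixed-point argument for the McKean--Vlasov equation \eqref{CBO kappa}. For the first step, viewing \eqref{CBOkappa particle} as a closed SDE on $\RR^{dN}$, Assumption \ref{assum1}(1) makes $\omega_\alpha^f(x)=e^{-\alpha f(x)}$ bounded, strictly positive, and locally Lipschitz with a polynomially-growing constant. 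Since the denominator $\frac{1}{N}\sum_j \omega_\alpha^f(X^j)$ stays uniformly bounded away from zero on any compact configuration, the map $(X^1,\dots,X^N)\mapsto \mathfrak{m}_\alpha(\rho_t^N)$ is locally Lipschitz. Together with the global Lipschitz property of $D(\cdot)$, this renders the drift and diffusion of \eqref{CBOkappa particle} locally Lipschitz and of linear growth, so classical SDE theory yields a unique maximal strong solution up to an explosion time $\tau_\infty$.

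For the moment bound, I would apply It\^o's formula to $|X_t^i|^p$ with $p\geq 2$, use Young's inequality and the identity $\|D(y)\|^2=|y|^2$, and invoke Lemma \ref{lem: useful estimates} applied to the empirical measure $\rho_t^N$ to obtain
\[
|\mathfrak{m}_\alpha(\rho_t^N)|^2 \,\leq\, b_1+b_2\,\tfrac{1}{N}\sum_{j=1}^{N}|X_t^j|^2.
\]
Averaging over $i\in[N]$, taking expectations, and applying Gronwall's inequality yields, for any $T>0$, a uniform bound on $\sup_{t\leq T\wedge \tau_\infty}\frac{1}{N}\sum_{i}\EE[|X_t^i|^p]$. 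A standard Burkholder--Davis--Gundy argument then upgrades this to the pathwise supremum bound \eqref{mbound}. Since the $p$-th moment stays bounded on $[0,T]$, the explosion time satisfies $\tau_\infty>T$ almost surely, so the local solution is in fact global.

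For the McKean--Vlasov equation, I would work in $C([0,T^\ast];\mathscr{P}_p(\RR^d))$ equipped with the uniform-in-time $W_2$ metric, restricted to curves with uniformly controlled $p$-th moment, and $T^\ast$ chosen small. Given such a curve $\mu=(\mu_t)_{t\leq T^\ast}$, the function $t\mapsto \mathfrak{m}_\alpha(\mu_t)$ is deterministic and continuous, and the decoupled SDE
\[
\rd Y_t=-\lambda\bigl(Y_t-\kappa\,\mathfrak{m}_\alpha(\mu_t)\bigr)\,\dt+\sigma\bigl(\delta\,\mathds{I}_d+D\bigl(Y_t-\kappa\,\mathfrak{m}_\alpha(\mu_t)\bigr)\bigr)\,\rd B_t
\]
has coefficients Lipschitz in $Y$ alone and hence admits a unique strong solution. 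Setting $\Phi(\mu)_t:=\mathrm{Law}(Y_t)$, the estimate $|\mathfrak{m}_\alpha(\mu)-\mathfrak{m}_\alpha(\nu)|\leq C\,W_2(\mu,\nu)$ on bounded subsets of $\mathscr{P}_2(\RR^d)$ (derivable from Lemma \ref{lem: useful estimates}, as in \cite[Lemma 3.2]{carrillo2018analytical}) makes $\Phi$ a strict contraction for $T^\ast$ small enough. Iterating over successive time intervals, and reusing the moment estimate of the previous paragraph with $\rho_t^\alpha=\mathrm{Law}(\OX_t)$ in place of $\rho_t^N$, yields well-posedness on any $[0,T]$ together with the second half of \eqref{mbound}.

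\textbf{The main obstacle} lies in controlling, along the flow, the denominator $\int \omega_\alpha^f\,\dd\mu$ in the definition of $\mathfrak{m}_\alpha$, since this quantity governs the Lipschitz constant of the fixed-point operator $\Phi$ and must be kept uniformly bounded below. This is precisely where the moment estimates are essential: via Lemma \ref{lem: useful estimates} they localize the mass of the (empirical or deterministic) measure in a ball where $\omega_\alpha^f$ admits a strictly positive lower bound. The modifications introduced in \eqref{CBO kappa} compared to \eqref{CBO}, namely the factor $\kappa$ and the additive term $\delta\,\mathds{I}_d$, do not interact with these estimates, so the argument transfers from the classical CBO case with only book-keeping changes.
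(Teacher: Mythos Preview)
Your proposal is correct and, for the particle system and the moment bounds, essentially identical to the paper's argument: local Lipschitz coefficients give a maximal strong solution, and the It\^o--Gronwall--BDG route (with Lemma \ref{lem: useful estimates} controlling $|\mathfrak{m}_\alpha|$) yields \eqref{mbound} and precludes explosion. The one genuine difference concerns the mean-field equation \eqref{CBO kappa}. The paper does not run a contraction argument; it simply invokes \cite[Theorem 3.1]{carrillo2018analytical} (or \cite[Theorem 2.2]{gerber2023mean}), where well-posedness is obtained via the \emph{Leray--Schauder} fixed-point theorem. That route only needs continuity of $\mu\mapsto\mathfrak{m}_\alpha(\mu)$ together with compactness coming from the a priori moment bound, and so it never has to quantify a Lipschitz constant or verify that a restricted set of curves is invariant under the solution map. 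Your Banach fixed-point approach is more elementary and self-contained, but it is exactly where the ``main obstacle'' you identify bites: you must first secure an a priori moment bound to carve out a subset of $C([0,T^\ast];\mathscr{P}_p)$ on which the denominator $\int\omega_\alpha^f\,\dd\mu$ is uniformly bounded below, check that $\Phi$ maps this set into itself, and only then contract. Both arguments go through here with only cosmetic changes relative to the classical CBO; the paper's choice simply mirrors the references it cites.
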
 

\begin{proof}
The proof of well--posedness for the particle system \eqref{CBOkappa particle} follows the same reasoning as in \cite[Theorem 2.1]{carrillo2018analytical} or \cite[Theorem 2.2]{gerber2023mean}, relying on the fact that the coefficients in \eqref{CBOkappa particle} are locally Lipschitz. 
For the mean-field dynamics \eqref{CBO kappa}, well-posedness can be established using the Leray-Schauder fixed-point theorem, as demonstrated in \cite[Theorem 3.1]{carrillo2018analytical} or \cite[Theorem 2.2]{gerber2023mean}. 
Finally, the moment estimates in \eqref{mbound} are derived from \cite[Lemma 3.4]{carrillo2018analytical} or \cite[Lemma 3.5]{gerber2023mean} by using the Burkholder--Davis--Gundy inequality.
\end{proof}

We can now prove the following uniform--in--time second moment bound for $\OX_t$ which will play a key role in the sequel.

\begin{theorem}\label{thm: finite moments}
Let $f(\cdot)$ satisfy assumption \ref{assum1} and  $\rho_0\in\mathscr{P}_{4}(\RR^d)$. Consider   $(\OX_t)_{t\geq 0}$  with $\rho_0$-distributed initial data satisfying \eqref{CBO kappa}. Then  for sufficiently small $\kappa$ there exists some constant $C_2$ depending only $\lambda,\sigma,d,\delta,\kappa $ and $f$  such that it holds 
\begin{equation}\label{eq:uni}
    \sup_{t\geq 0}\;\EE\left[\,|\OX_t|^2\,\right]\leq \mathbb{E}\left[\, |\OX_{0}|^{2}\, \right] + C_2\,.
\end{equation}
\end{theorem}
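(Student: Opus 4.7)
The plan is to apply Itô's formula to $|\overline{X}_t|^2$, take expectations to obtain a differential inequality for $m(t):=\mathbb{E}[|\overline{X}_t|^2]$, and then close the inequality into the form $m'(t)\le -C_1\, m(t)+C_0$ by exploiting smallness of $\kappa$. The uniform bound then follows by comparison with the ODE solution. Writing $Y_t:=\overline{X}_t - \kappa\,\mathfrak{m}_\alpha(\rho^\alpha_t)$, since the diffusion matrix $\sigma(\delta\,\mathds{I}_d+D(Y_t))$ is diagonal, Itô's formula gives
\begin{equation*}
\frac{\dd}{\dt}\mathbb{E}\bigl[|\overline{X}_t|^2\bigr] \;=\; -2\lambda\,\mathbb{E}\bigl[|\overline{X}_t|^2\bigr] + 2\lambda\kappa\,\mathbb{E}[\overline{X}_t]\cdot\mathfrak{m}_\alpha(\rho^\alpha_t) + \sigma^2\sum_{i=1}^{d}\mathbb{E}\bigl[(\delta+|Y_t^i|)^2\bigr].
\end{equation*}

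Next, I would expand the trace term as $\sum_i(\delta+|Y_t^i|)^2 = d\delta^2 + 2\delta\sum_i|Y_t^i| + |Y_t|^2$, bound $\sum_i|Y_t^i|\le\sqrt d\,|Y_t|$ by Cauchy--Schwarz, and use $|Y_t|^2\le 2|\overline{X}_t|^2+2\kappa^2|\mathfrak{m}_\alpha(\rho^\alpha_t)|^2$. Young's inequality handles the cross terms $2\delta\sqrt d\,\mathbb{E}|Y_t|$ and $2\lambda\kappa\,\mathbb{E}[\overline{X}_t]\cdot\mathfrak{m}_\alpha(\rho^\alpha_t)$, each splitting into a piece absorbable into $m(t)$ with arbitrarily small coefficient $\varepsilon>0$ plus a constant or a multiple of $|\mathfrak{m}_\alpha(\rho^\alpha_t)|^2$. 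At this point, Lemma~\ref{lem: useful estimates} is crucial: it replaces $|\mathfrak{m}_\alpha(\rho^\alpha_t)|^2$ by $b_1+b_2\,m(t)$, with $b_2$ uniform in $\alpha\ge 1$. Collecting terms yields
\begin{equation*}
m'(t)\;\le\;\Bigl[-2\lambda + \varepsilon(\lambda,\sigma,\delta,d) + \kappa^2\,b_2\,K(\lambda,\sigma,\delta,d)\Bigr] m(t) \;+\; C_0(\lambda,\sigma,\delta,d,\kappa,f),
\end{equation*}
where $K$ collects the coefficients multiplying $|\mathfrak{m}_\alpha(\rho^\alpha_t)|^2$ and $\varepsilon$ can be made as small as desired at the expense of enlarging $C_0$.

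Finally, I would fix $\varepsilon$ small and then choose $\kappa\in(0,1)$ small enough that the bracket is a strictly negative constant $-C_1<0$, independent of $\alpha$ (for $\alpha\ge 1$). Then by comparison with the scalar ODE $y'=-C_1y+C_0$,
\begin{equation*}
m(t)\;\le\;e^{-C_1 t}\,m(0) + \frac{C_0}{C_1}\bigl(1-e^{-C_1 t}\bigr)\;\le\;m(0) + \frac{C_0}{C_1},
\end{equation*}
which gives \eqref{eq:uni} with $C_2:=C_0/C_1$. The finite fourth moment assumption $\rho_0\in\mathscr{P}_4(\RR^d)$ enters through Lemma~\ref{thmexist}, justifying that the stochastic integrals are true martingales so that the expectation computations above are legitimate.

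The main obstacle is the closure of the inequality: the anisotropic-with-shift diffusion $\sigma(\delta\,\mathds I_d+D(Y_t))$ produces a term $\sigma^2|Y_t|^2\sim 2\sigma^2 m(t)$ that competes with the restoring drift $-2\lambda m(t)$, while the $\kappa$-contribution of the consensus point, amplified by Lemma~\ref{lem: useful estimates} into a multiplicative $\kappa^2 b_2\,m(t)$ term, must also be absorbed. Making the coefficient of $m(t)$ negative therefore requires both a smallness condition relating $\sigma$, $\delta$, $\lambda$, $d$ (implicit in the parameter regime under which the theorem is stated) and an explicit smallness of $\kappa$; careful bookkeeping of how Young's constants depend on these parameters is the only real subtlety.
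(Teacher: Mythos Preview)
Your approach is essentially identical to the paper's: It\^o on $\tfrac12|x|^2$, martingale justification via the fourth-moment assumption and Lemma~\ref{thmexist}, bounding the trace term, invoking Lemma~\ref{lem: useful estimates} to close the inequality on $m(t)$, then Gr\"onwall. One bookkeeping slip: your displayed differential inequality $m'\le[-2\lambda+\varepsilon+\kappa^2 b_2 K]m+C_0$ omits the unavoidable $2\sigma^2$ contribution from $\sigma^2|Y_t|^2$ (which you yourself flag in the final paragraph); the paper's coefficient reads $-\gamma$ with $\gamma=\lambda(1-\tfrac\kappa2)-\sigma^2(1+\delta)-b_2\bigl(\lambda\tfrac\kappa2+\sigma^2(1+\delta)\kappa^2\bigr)$, and positivity of $\gamma$ requires a $\lambda$ vs.\ $\sigma^2$ condition (e.g.\ $\lambda>16\sigma^2$ when $\delta=1$) in addition to small $\kappa$, exactly as you anticipate.
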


Before we proceed with the proof, let us recall It\^{o}'s formula of the sake of self-containedness: denote the dynamics of the process $\overline{X}$ by the shorthand notation
\begin{equation*}
    \dd \overline{X}_{t} = \bm{\mu}_{t} \, \dd t + \bm{\varsigma}_{t} \,\dd B_t
\end{equation*}
It\^{o}'s formula states that for any function $h\in C^{2}(\mathbb{R}^{d})$ one has
\begin{equation}\label{ito}
\begin{aligned}
    \dd h(\overline{X}_t) 
    & = \left\{\,\nabla h(\overline{X}_t)\cdot\bm{\mu}_{t} + \frac{1}{2}\text{trace}(\bm{\varsigma}_{t}\cdot D^{2}h(\overline{X}_t)\,\bm{\varsigma}_{t}) \,\right\} \dd t \\
    & \qquad  + \nabla h(\overline{X}_t)\cdot\bm{\varsigma}_{t}\,\dd B_t.
\end{aligned}
\end{equation}

\begin{proof}
Let $h(x):=\frac{1}{2}|x|^{2}$, for $x\in \mathbb{R}^{d}$, and let $\overline{X}$ be the process defined in \eqref{CBO kappa}. Then applying It\^{o}'s formula \eqref{ito} leads to
\begin{align*}
    \rd h(\OX_{t}) & = \bigg\{ -\lambda \nabla h(\OX_{t})\cdot \big(\OX_{t} - \kappa \, \mathfrak{m}_{\alpha}(\rho_{t})\big) \\
    & \quad \quad \quad + \frac{\sigma^{2}}{2} \text{trace}\left[D^{2}h(\OX_{t}) \bigg( \delta\,\mathds{I}_{d} + D\big(\OX_{t} - \kappa\,\mathfrak{m}_{\alpha}(\rho_{t})\big) \bigg)^{2} \right] \bigg\}\,\rd t \\
    & \quad \quad \quad \quad \quad \quad + \sigma\,\nabla h(\OX_{t})^{\top}\bigg( \delta\,\mathds{I}_{d} + D\big(\OX_{t} - \kappa\,\mathfrak{m}_{\alpha}(\rho_{t})\big) \bigg)\, \rd B_{t}\\
    & =: F(t,\OX_{t})\,\rd t + G(t,\OX_{t})\,\rd B_{t}.
\end{align*}

One could observe that, thanks to Lemma \ref{thmexist} we have
\begin{equation*}
    \mathbb{E}\left[\int_{0}^{t} G(s,\OX_{s})\,\rd B_{s}\right] = 0,\quad \forall\, t>0.
\end{equation*}
This is true in particular because of the well-posedness of the process $t \mapsto \OX_{t}$ according to Lemma \ref{thmexist} with $X_0^i\in \mathscr{P}_{4}(\RR^d)$, and its finite moments which guarantee that $\mathbb{E}\left[\int_{0}^{t} |G(s,\OX_{s})|^{2}\rd s\right]\leq C\,\EE\left[\int_0^t|\OX_s|^{4}\rd s\right] <+\infty$. 
Therefore we have 
\begin{equation}\label{differentiation}
    \frac{\rd}{\rd t}\mathbb{E}\left[\frac{1}{2}|\OX_{t}|^{2}\right] = \mathbb{E}[F(t, \OX_{t})], \quad \forall \, t>0.
\end{equation} 

We can now further estimate the right-hand side as follows. Recall
\begin{align*}
    F(t,\OX_{t}) 
     &= -\lambda \nabla h(\OX_{t})\cdot \big(\OX_{t} - \kappa \, \mathfrak{m}_{\alpha}(\rho^{N}_{t})\big) \\
     &\quad+ \frac{\sigma^{2}}{2} \text{trace}\left[D^{2}h(\OX_{t}) \bigg( \delta\,\mathds{I}_{d} + D\big(\OX_{t} - \kappa\,\mathfrak{m}_{\alpha}(\rho^{N}_{t})\big) \bigg)^{2} \right]
\end{align*}
and with $h(x)=\frac{1}{2}|x|^{2}$ we have $\nabla h(x) = x$,  and $D^{2}h(x) = \mathds{I}_{d}$. 
We deal first with the trace term in $F(t,\OX_{t})$. We have
\begin{align*}
    & \text{trace}\left[D^{2}h(\OX_{t}) \bigg( \delta\,\mathds{I}_{d} + D\big(\OX_{t} - \kappa\,\mathfrak{m}_{\alpha}(\rho_{t})\big) \bigg)^{2} \right]\\
    &\quad\quad=\; \text{trace}\left[D^{2}h(\OX_{t}) \bigg( \delta^{2}\,\mathds{I}_{d} + D\big(\OX_{t} - \kappa\,\mathfrak{m}_{\alpha}(\rho_{t})\big)^{2} + 2\delta\, D\big(\OX_{t} - \kappa\,\mathfrak{m}_{\alpha}(\rho_{t})\big)\bigg) \right]\\
    &\quad\quad=\; \delta^{2}\,\text{trace}(D^{2}h(\OX_{t})) + \text{trace}\big[ D^{2}h(\OX_{t})D(\OX_{t} - \kappa\,\mathfrak{m}_{\alpha}(\rho_{t}))^{2}\big] \\
    & \quad\quad\quad\quad + 2\delta\,\text{trace}\big[D^{2}h(\OX_{t})D(\OX_{t} - \kappa\,\mathfrak{m}_{\alpha}(\rho_{t}))\big]\;\\
    &\quad\quad\leq \; \delta^{2}d +  \sum\limits_{k=1}^{d} |(\OX_{t} - \kappa\,\mathfrak{m}_{\alpha}(\rho_{t}))_{k}|^{2}+ 2\delta \sum\limits_{k=1}^{d}|(\OX_{t} - \kappa\, \mathfrak{m}_{\alpha}(\rho_{t}))_{k}|\\
    &\quad\quad\leq \;  \delta^{2}d +\delta d+(\delta+1)|\OX_{t} - \kappa\, \mathfrak{m}_{\alpha}(\rho_{t})|^{2}.
\end{align*}
In the last line, we used $|(\OX_{t} - \kappa\, \mathfrak{m}_{\alpha}(\rho_{t}))_{k}| \leq \frac{1}{2}(|(\OX_{t} - \kappa\, \mathfrak{m}_{\alpha}(\rho_{t}))_{k}|^{2} + 1)$, then the sum yields $\sum\limits_{k=1}^{d}|(\OX_{t} - \kappa\, \mathfrak{m}_{\alpha}(\rho_{t}))_{k}| \leq \frac{1}{2}(|\OX_{t} - \kappa\, \mathfrak{m}_{\alpha}(\rho_{t})|^{2} + d)$. 
Therefore we have
\begin{align*}
    F(t,\OX_{t}) 
    & \leq -\lambda\left(1 -\frac{\kappa}{2}\right) |\OX_{t}|^{2} + \lambda\,\frac{\kappa}{2}|\mathfrak{m}_{\alpha}(\rho_{t})|^{2} \\
    & \quad \quad \frac{\sigma^2}{2}\delta(\delta+1)d+\sigma^2(\delta+1)|\OX_{t}|^2+\sigma^2(\delta+1) \kappa^2 |\mathfrak{m}_{\alpha}(\rho_{t})|^{2}
     \\
    & \leq \left\{ -\lambda\left(1 -\frac{\kappa}{2}\right) + \sigma^{2}\left(1+\delta\right) \right\}\,|\OX_{t}|^{2}  \\
    & \quad \quad + \left\{ \lambda\,\frac{\kappa}{2} + \sigma^{2}\left(1+\delta\right)\kappa^{2} \right\}\,|\mathfrak{m}_{\alpha}(\rho_{t})|^{2} + \frac{\sigma^2}{2}\delta(\delta+1)d.
\end{align*}
Recalling \eqref{lemeq2}, it holds that
\begin{equation*}
|\mathfrak{m}_{\alpha}(\rho_{t})|^{2}\leq b_1+b_2\,\EE[|\OX_t|^2]\,,
\end{equation*}
which implies that
\begin{align*}
   \EE[F(t,\OX_{t})]  & \leq \left\{ -\lambda\left(1 -\frac{\kappa}{2}\right) + \sigma^{2}\left(1+\delta\right) +b_2\left( \lambda\,\frac{\kappa}{2} + \sigma^{2}\left(1+\delta\right)\kappa^{2} \right)\right\}\,\EE[|\OX_{t}|^{2}] \\
    & \quad \quad + \left( \lambda\,\frac{\kappa}{2} + \sigma^{2}\left(1+\delta\right)\kappa^{2} \right)b_1 + \frac{\sigma^2}{2}\delta(\delta+1)d.  
\end{align*}

The latter, together with \eqref{differentiation} finally yields
\begin{align*}
    \frac{\rd}{\rd t}\mathbb{E}\left[\frac{1}{2}|\OX_{t}|^{2}\right] & = \mathbb{E}\left[F(t,\OX_{t})\right]
     \leq -\gamma\,\mathbb{E}\left[|\OX_{t}|^{2}\right] + C_2
\end{align*}
where
\begin{equation}\label{gamma C}
\begin{aligned}
    & \gamma :=  \lambda\left(1 -\frac{\kappa}{2}\right) - \sigma^{2}\left(1+\delta \right) - b_2\left( \lambda\,\frac{\kappa}{2} + \sigma^{2}\left(1+\delta\right)\kappa^{2} \right) \\
    & \text{and } \quad C_2 := \left( \lambda\,\frac{\kappa}{2} + \sigma^{2}\left(1+\delta\right)\kappa^{2} \right)b_1 + \frac{\sigma^2}{2}\delta(\delta+1)d. 
\end{aligned}
\end{equation}
It suffices to choose $\lambda,\sigma, \delta$, and $\kappa$ in order to guarantee $\gamma>0$,
which together with Gr\"{o}nwall's inequality yields
\begin{equation}\label{eq:uni proof}
    \mathbb{E}\left[ |\OX_{t}|^{2} \right] \leq \mathbb{E}\left[ |\OX_{0}|^{2} \right]\, e^{-\gamma\,t } + C_2 \, \leq \mathbb{E}\left[ |\OX_{0}|^{2} \right] + C_2.
\end{equation}
\end{proof}

\begin{remark}\label{rmk: configuration}
An example of a simple configuration of the parameters where \eqref{eq:uni} is guaranteed can be obtained as follows. 
Recalling the notation in \eqref{constant b1 b2} and then in \eqref{gamma C}, we can choose for example $\delta=1$, then we need to satisfy the inequality
\begin{align*}
    & \gamma =  \lambda\left(1 -\frac{\kappa}{2}\right) - 2\,\sigma^{2} - b_2\left( \lambda\,\frac{\kappa}{2} + 2\sigma^{2}\kappa^{2} \right) \; >0\\
    \Leftrightarrow \quad & \lambda \; > \lambda \, \frac{\kappa}{2} (1+b_2) + 2\, \sigma^{2}(1+ b_2 \kappa^{2} ) \\
    \Leftarrow \quad & \lambda \; > \lambda \, \kappa \,(1+b_2) + 4\, \sigma^{2}(1+ b_2 \kappa^{2} )\\
    \Leftrightarrow \quad & \lambda \; (1 - \, \kappa \,(1+b_2)) > 4\, \sigma^{2}(1+ b_2 \kappa^{2} )
\end{align*}
Choosing $\kappa < \frac{1}{2}(1+ b_2)^{-1}$ yields in particular $(1 - \, \kappa \,(1+b_2)) > \frac{1}{2}$ and  $b_2 \,\kappa^{2} < 1$. Therefore, we have 
\begin{align*}
    \lambda \; (1 - \, \kappa \,(1+b_2)) > \frac{\lambda}{2} 
    \quad  \text{ and } \quad  8\, \sigma^{2} > 4\, \sigma^{2}(1+ b_2 \kappa^{2} ).
\end{align*}
Conclusion: to guarantee $\gamma>0$, an example of sufficient conditions is when
\begin{align*}
    \delta=1, \quad \kappa < \frac{1}{2(1+b_2)}, \quad  \text{ and } \quad \lambda > 16\, \sigma^{2}.
\end{align*}
\end{remark}

Lastly, we recall a stability estimate on $\mathfrak{m}_{\alpha}(\mu)$ from \cite[Corollary 3.3, Proposition A.3]{gerber2023mean}. It will be used in the proof of the subsequent results. 

\begin{lemma}\label{lem: stab}
Suppose $f(\cdot)$ satisfies Assumption \ref{assum1}. Then for all $R>0$,
there exists some constant $L_{\mathfrak{m}}>0$ depending on $R$ such that
\begin{equation}\label{ineq stab}
    |\mathfrak{m}_{\alpha}(\mu)-\mathfrak{m}_{\alpha}(\nu)|\leq L_{\mathfrak{m}} \mathbb{W}_2(\mu,\nu)\quad \forall (\mu,\nu)\in \mathscr{P}_{2,R}(\RR^d)\times \mathscr{P}_{2}(\RR^d)\,.
\end{equation}
\end{lemma}

\section{On the invariant measure}\label{sec: invariant}

This section is devoted to the limit $t\to \infty$. We will first prove existence of an invariant measure for \eqref{CBO kappa} in \S\ref{subsec: exist}. Then we will we will establish in \S\ref{subsec: limit t} a $\mathbb{W}_{2}$--exponential contraction for the convergence of the law of \eqref{CBO kappa} towards the invariant measure. This shall be next used in \S \ref{subsec: unif} where we will provide a uniform-in-$\alpha$ second moment of the invariant measure (that will be later needed in Section \ref{sec: convergence}). Finally, we will show that the invariant measure verifies a Harnack type inequality which will lead to a positive and uniform-in-$\alpha$ lower bound on its support: a key result for Laplace's principle in Section \ref{sec: convergence}.

\subsection{Existence}\label{subsec: exist}

The following result ensures existence of an invariant measure for the process \eqref{CBO kappa}.

\begin{theorem}\label{thm: existence}
    Suppose $f(\cdot)$ satisfies Assumption \ref{assum1}. For sufficiently small $\kappa \in (0,1)$, and for a large $\lambda>0$, there exists an invariant measure $\rho^{\alpha}_{*}\in \mathscr{P}_{2,R}(\RR^d)$, for some sufficiently large $R$ that depends on $\kappa, \delta, \lambda, \sigma, b_1, b_2$ and the initial distribution. 
\end{theorem}

\begin{proof}
The proof relies on \cite[Theorem 2.2]{zhang2023existence}. See Appendix \ref{app: existence}.
\end{proof}

\begin{remark}\label{rmk: configuation 2}
As can be seen in Appendix \ref{app: existence}, the precise condition on the parameters is explicitly given by \eqref{equiv C1 C3} that is 
\begin{equation*}
    2\lambda - 4  > \lambda\,\kappa\,(1+b_2) + 4\,\kappa^{2} b_2
\end{equation*}
where we recall $b_2$ is defined in \eqref{constant b1 b2}. 
A simple example of sufficient conditions is when
\begin{equation*}
    \kappa < \frac{1}{2(1+b_2)} \qquad \text{ and } \qquad \lambda >4.
\end{equation*}
\end{remark}

\subsection{Long-time limit}\label{subsec: limit t}

The next result provides a $\mathbb{W}_{2}$--exponential contraction for the evolution of the distribution of $\overline{X}$ as defined in \eqref{CBO kappa} and cements its long-time limit.

\begin{theorem}\label{thm:long time}
Let us consider \eqref{CBO kappa} whose distribution is $\rho^{\alpha}_t := \operatorname{Law}(\overline{X}_t)$ for all $t\geq0$. Suppose $f(\cdot)$ satisfies Assumption \ref{assum1}. Then for sufficiently small $\kappa \in (0,1)$ and $\sigma>0$, there exists $\Theta>0$ such that
\begin{equation}\label{contraction}
    \mathbb{W}_{2}(\rho^{\alpha}_{t}, \rho^{\alpha}_{*})^{2} \leq e^{-\Theta \, t}\,\mathbb{W}_{2}(\rho^{\alpha}_{0}, \rho^{\alpha}_{*})^{2} \quad \forall\,t\geq 0
\end{equation}
and $\Theta$ depends on $\lambda, \sigma, \kappa$ and $L_{\mathfrak{m}}$. In particular, the invariant measure $\rho^{\alpha}_{*}$ of \eqref{CBO kappa} is unique.
\end{theorem}

We refer to Remark \ref{rmk: configuration 3} below for a precise definition of the smallness of the parameters. 

\begin{remark}
The standard CBO \eqref{CBO} (i.e. when $\kappa =1$ and $\delta =0$ in \eqref{CBO kappa}) admits infinitely many invariant measures: see \cite[Remark 1.1]{huang2025uniform}.
\end{remark}

An immediate consequence of Theorem \ref{thm:long time} and Lemma \ref{lem: stab} is the following corollary. 

\begin{corollary}\label{cor: long time of m}
In the situation of Theorem \ref{thm:long time}, it holds
\begin{equation*}
    |\mathfrak{m}_{\alpha}(\rho^{\alpha}_{t})-\mathfrak{m}_{\alpha}(\rho^{\alpha}_{*})| \leq L_{\mathfrak{m}} e^{-\frac{\Theta}{2} \, t}\,\mathbb{W}_{2}(\rho^{\alpha}_{0}, \rho^{\alpha}_{*}) \quad \forall\,t\geq 0.
\end{equation*}
In particular, one has $\lim\limits_{t\to \infty} \mathfrak{m}_{\alpha}(\rho^{\alpha}_{t}) = \mathfrak{m}_{\alpha}(\rho^{\alpha}_{*})$.
\end{corollary}

\begin{proof}[Proof of Theorem \ref{thm:long time}] 
Let us consider two processes governed by the same dynamics, but starting from different initially distributed positions. The first one is \eqref{CBO kappa} which we recall is
\begin{equation}\label{CBO kappa proof}
\begin{aligned}
    \rd \OX_t = & -\lambda\left(\OX_t - \kappa\,\mathfrak{m}_{\alpha}(\rho^{\alpha}_t)\right)\,\dt \,+ \,\sigma\left(\delta\,\mathds{I}_{d} + D\left(\OX_t - \kappa\, \mathfrak{m}_{\alpha}(\rho^{\alpha}_t)\right) \right)\,\rd B_t\\
    & \text{with } \; \rho^{\alpha}_t \, := \, \text{Law}(\OX_t) \quad \text{ and } \quad  \OX_0 \sim \rho^{\alpha}_0.
\end{aligned}
\end{equation}
We also have, thanks to Theorem \ref{thm: finite moments}, $\operatorname{Law}(X_t) = \rho^{\alpha}_{t}\in \mathscr{P}_{2}(\mathbb{R}^{d})$ for all $t\geq 0$. 

The second process is the stationary counter-part of \eqref{CBO kappa proof}, whose initial condition has the invariant measure as distribution, and that is
\begin{equation}\label{CBO kappa stat}
\begin{aligned}
    \rd \overline{Z}_t = & -\lambda\left(\overline{Z}_t - \kappa\,\mathfrak{m}_{\alpha}(\rho^{\alpha}_*)\right)\,\dt \,+ \,\sigma\left(\delta\,\mathds{I}_{d} + D\left(\overline{Z}_t - \kappa\, \mathfrak{m}_{\alpha}(\rho^{\alpha}_*)\right) \right)\,\rd B_t\\
    & \text{with } \; \rho^{\alpha}_*\, := \, \text{Law}(\overline{Z}_t) \quad \forall\, t\geq 0.
\end{aligned}
\end{equation}
Indeed, by definition of $\rho^{\alpha}_{*}$ being an invariant measure for \eqref{CBO kappa proof}, the law of $\overline{Z}_{t}$ remains equal to $\rho^{\alpha}_{*}$ for all $t\geq 0$, hence the process is stationary. 

We have 
\begin{align*}
    \rd (\overline{X}_{t} - \overline{Z}_{t}) & = - \lambda \bigg((\overline{X}_{t} - \overline{Z}_{t}) - \kappa \big(\mathfrak{m}_{\alpha}(\rho^{\alpha}_{t}) - \mathfrak{m}_{\alpha}(\rho^{\alpha}_{*})\big)\bigg)\,\rd t\\
    & \quad \quad \quad + \sigma\bigg( D(\overline{X}_{t} - \kappa\, \mathfrak{m}_{\alpha}(\rho^{\alpha}_{t})) - D(\overline{Z}_{t} - \kappa\,\mathfrak{m}_{\alpha}(\rho^{\alpha}_{*})) \bigg)\, \rd B_{t}.
\end{align*}
As in the proof of Theorem \ref{thm: finite moments}, we use the function $h:\mathbb{R}^{d}\to \mathbb{R}$, such that $h(z) = \frac{1}{2}|z|^{2}$, and we apply It\^o's formula \eqref{ito}  which  leads to 
\begin{align*}
    \rd h(\overline{X}_{t} - \overline{Z}_{t}) = 
    & \bigg\{ -\lambda (\overline{X}_{t} - \overline{Z}_{t})\cdot \big[(\overline{X}_{t} - \overline{Z}_{t}) - \kappa(\mathfrak{m}_{\alpha}(\rho^{\alpha}_{t}) - \mathfrak{m}_{\alpha}(\rho^{\alpha}_{*}))\big]\\
    & \quad + \frac{\sigma^{2}}{2}\,\text{trace}\bigg[\bigg( D(\overline{X}_{t} - \kappa\, \mathfrak{m}_{\alpha}(\rho^{\alpha}_{t})) - D(\overline{Z}_{t} - \kappa\,\mathfrak{m}_{\alpha}(\rho^{\alpha}_{*})) \bigg)^{2}\bigg]\bigg\}\,\rd t\\
    & \quad \quad + \sigma (\overline{X}_{t} - \overline{Z}_{t})\cdot \bigg[D(\overline{X}_{t} - \kappa\, \mathfrak{m}_{\alpha}(\rho^{\alpha}_{t})) - D(\overline{Z}_{t} - \kappa\,\mathfrak{m}_{\alpha}(\rho^{\alpha}_{*}))\bigg]\,\rd B_{t}\\
    & =: \mathbf{F}_{t}\,\rd t + \mathbf{G}_{t}\, \rd B_{t}.
\end{align*}
The same arguments as in the beginning of the proof of \cite[Theorem 2.4]{huang2025uniform} (see in particular the proof of \cite[equation (2.6)]{huang2025uniform}), allow us to write
\begin{align*}
    & \frac{\rd}{\rd t}\EE\left[|\overline{X}_{t} - \overline{Z}_{t}|^{2}\right]  = 2 \, \EE\left[\mathbf{F}_{t}\right]\\
    & \quad = -2\,\lambda\, \EE\left[|\overline{X}_{t} - \overline{Z}_{t}|^{2}\right] + 2\,\lambda\,\kappa\, \EE\left[(\overline{X}_{t} - \overline{Z}_{t})\cdot \big(\mathfrak{m}_{\alpha}(\rho^{\alpha}_{t}) - \mathfrak{m}_{\alpha}(\rho^{\alpha}_{*})\big)\right]\\
    & \quad \quad \quad \quad + \sigma^{2}\,\EE\left[\text{trace}\bigg(\big( D(\overline{X}_{t} - \kappa\, \mathfrak{m}_{\alpha}(\rho^{\alpha}_{t})) - D(\overline{Z}_{t} - \kappa\,\mathfrak{m}_{\alpha}(\rho^{\alpha}_{*})) \big)^{2}\bigg)\right]. 
\end{align*}
We can now estimate each of the terms above. We have
\begin{align*}
    & \EE\left[(\overline{X}_{t} - \overline{Z}_{t})\cdot \big(\mathfrak{m}_{\alpha}(\rho^{\alpha}_{t}) - \mathfrak{m}_{\alpha}(\rho^{\alpha}_{*})\big)\right] \leq \frac{1}{2}\EE\left[|\overline{X}_{t} - \overline{Z}_{t}|^{2}\right] + \frac{1}{2}|\mathfrak{m}_{\alpha}(\rho^{\alpha}_{t}) - \mathfrak{m}_{\alpha}(\rho^{\alpha}_{*})|^{2}
\end{align*}
and
\begingroup\allowdisplaybreaks
\begin{align*}
    & \EE\left[\text{trace}\bigg(\big( D(\overline{X}_{t} - \kappa\, \mathfrak{m}_{\alpha}(\rho^{\alpha}_{t})) - D(\overline{Z}_{t} - \kappa\,\mathfrak{m}_{\alpha}(\rho^{\alpha}_{*})) \big)^{2}\bigg)\right] \\
    & \quad =  \sum\limits_{k=1}^{d} \EE\left[
    \bigg( |\{\overline{X}_{t} - \kappa\,\mathfrak{m}_{\alpha}(\rho^{\alpha}_{t})\}_{k}| - |\{\overline{Z}_{t} - \kappa\,\mathfrak{m}_{\alpha}(\rho^{\alpha}_{*})\}_{k}| \bigg)^{2}
    \right]\\
    & \quad \leq \sum\limits_{k=1}^{d} \EE\left[
    \left|\big\{\overline{X}_{t}-\overline{Z}_{t}\big\}_{k} - \kappa\,\{\mathfrak{m}_{\alpha}(\rho^{\alpha}_{t}) - \mathfrak{m}_{\alpha}(\rho^{\alpha}_{*})\big\}_{k}\right| ^{2}
    \right]\\
    & \quad \leq 2 \sum\limits_{k=1}^{d} \EE\left[
    \left|\big\{\overline{X}_{t}-\overline{Z}_{t}\big\}_{k}\right|^{2} + \kappa^{2}\,\left|\{\mathfrak{m}_{\alpha}(\rho^{\alpha}_{t}) - \mathfrak{m}_{\alpha}(\rho^{\alpha}_{*})\big\}_{k}\right| ^{2}
    \right]\\
    & \quad \leq 2\, \EE\left[|\overline{X}_{t} - \overline{Z}_{t}|^{2}\right] + 2 \,\kappa^{2}\,|\mathfrak{m}_{\alpha}(\rho^{\alpha}_{t}) - \mathfrak{m}_{\alpha}(\rho^{\alpha}_{*})|^{2} .
\end{align*}
\endgroup
Therefore we have
\begin{equation}\label{eq: estimate final proof}
\begin{aligned}
    \frac{\rd}{\rd t}\EE\left[|\overline{X}_{t} - \overline{Z}_{t}|^{2}\right]  & \leq (-2\,\lambda + \lambda\,\kappa\, +2\,\sigma^{2})\, \EE\left[|\overline{X}_{t} - \overline{Z}_{t}|^{2}\right] \\
    &\quad \quad \quad  + (\lambda\,\kappa\, + 2\,\sigma^{2}\,\kappa^{2})\,|\mathfrak{m}_{\alpha}(\rho^{\alpha}_{t}) - \mathfrak{m}_{\alpha}(\rho^{\alpha}_{*})|^{2}.
\end{aligned}
\end{equation}

For sufficiently small values of $\kappa$ and $\sigma$, one guarantees $-2\,\lambda + \lambda\,\kappa\, +2\,\sigma^{2} < 0$. Let us define  $\theta:= 2\,\lambda - \lambda\,\kappa\, -2\,\sigma^{2}>0$, and $\gamma := \lambda\,\kappa\, + 2\,\sigma^{2}\,\kappa^{2}$. 
Therefore, Gr\"{o}nwall's inequality yields 
\begin{equation}\label{ineq: after gronwall}
    \mathbb{E}\left[\,|\overline{X}_{t} - \overline{Z}_{t}|^{2}\,\right] \leq e^{-\theta\, t}\mathbb{E}\left[\,|\overline{X}_{0} - \overline{Z}_{0}|^{2}\,\right] + \gamma \int_{0}^{t} |\mathfrak{m}_{\alpha}(\rho^{\alpha}_{s}) - \mathfrak{m}_{\alpha}(\rho^{\alpha}_{*})|^{2}\, e^{-\theta(t-s)}\,\dd s.
\end{equation}
Using $\mathbb{W}_{2}(\mu,\nu)^{2} = \inf\limits_{(Y\sim \mu,\, Y'\sim \nu)} \mathbb{E}[|Y-Y'|^{2}]$, we can take the infimum first in the left-hand side of \eqref{ineq: after gronwall}, then in the right-hand side, and obtain
\begin{equation*}
    \mathbb{W}_{2}(\rho^{\alpha}_{t}, \rho^{\alpha}_{*})^{2} \leq e^{-\theta\, t}\mathbb{W}_{2}(\rho^{\alpha}_{0}, \rho^{\alpha}_{*})^{2} + \gamma \int_{0}^{t} |\mathfrak{m}_{\alpha}(\rho^{\alpha}_{s}) - \mathfrak{m}_{\alpha}(\rho^{\alpha}_{*})|^{2}\, e^{-\theta(t-s)}\,\dd s.
\end{equation*}
We are left with the last term which we estimate as follows. Since $\rho^{*}_{\alpha}\in \mathscr{P}_{2,R}(\mathbb{R}^{d})$, we can use \eqref{ineq stab} and obtain
\begin{equation*}
    |\mathfrak{m}_{\alpha}(\rho^{\alpha}_{t})-\mathfrak{m}_{\alpha}(\rho^{\alpha}_{*})|\leq L_{\mathfrak{m}} \mathbb{W}_2(\rho^{\alpha}_{t},\rho^{\alpha}_{*}).
\end{equation*}
Subsequently, we have
\begin{equation*}
\begin{aligned}
    \mathbb{W}_{2}(\rho^{\alpha}_{t}, \rho^{\alpha}_{*})^{2} 
    & \leq e^{-\theta\, t}\mathbb{W}_{2}(\rho^{\alpha}_{0}, \rho^{\alpha}_{*})^{2} + \gamma \, L_{\mathfrak{m}}^{2} \,
    \int_{0}^{t} \mathbb{W}_2(\rho^{\alpha}_{s},\rho^{\alpha}_{*})^{2}\, e^{-\theta(t-s)}\,\dd s\\
    \Rightarrow\; e^{\theta\, t} \mathbb{W}_{2}(\rho^{\alpha}_{s}, \rho^{\alpha}_{*})^{2}& \leq \mathbb{W}_{2}(\rho^{\alpha}_{0}, \rho^{\alpha}_{*})^{2} + \gamma \, L_{\mathfrak{m}}^{2} \,
    \int_{0}^{t} \mathbb{W}_2(\rho^{\alpha}_{s},\rho^{\alpha}_{*})^{2}\, e^{\theta\,s}\,\dd s
\end{aligned}
\end{equation*}
Let us introduce $u(t): = e^{\theta\, t}\mathbb{W}_{2}(\rho^{\alpha}_{t}, \rho^{\alpha}_{*})^{2}$, $a := \gamma \, L_{\mathfrak{m}}^{2}$. Then we have
\begin{equation*}
    u(t) \leq u(0) + \int_{0}^{t} a \,u(s)\, \dd s
\end{equation*}
We can now apply the integral version\footnote{Suppose $u(t) \leq u_{0} + \int_{0}^{t} a(s)u(s)\,\dd s + \int_{0}^{t} b(s)\,\dd s$ with $a\geq 0$. Then denoting by $A(t) = \int_{0}^{t} a(s)\,\dd s$, the following inequality holds: $u(t) \leq u_{0} e^{A(t)} + \int_{0}^{t} b(s)\,e^{A(t) - A(s)}\,\dd s$.} of Gr\"{o}nwall's inequality and obtain 
\begin{equation*}
\begin{aligned}
    u(t) 
    & \leq e^{a\,t} u(0)\\
    \Rightarrow\; \mathbb{W}_{2}(\rho^{\alpha}_{t}, \rho^{\alpha}_{*})^{2} &\leq e^{(a-\theta) t}\,\mathbb{W}_{2}(\rho^{\alpha}_{0}, \rho^{\alpha}_{*})^{2} \quad \forall\,t\geq 0
\end{aligned}
\end{equation*}
where we recall $\theta= 2\,\lambda - \lambda\,\kappa\, -2\,\sigma^{2}>0$ and $a = \gamma\, L_{\mathfrak{m}}^{2} = \kappa\,(\lambda + 2\,\sigma^{2}\,\kappa)\, L_{\mathfrak{m}}^{2}$. For sufficiently small $\kappa$, we have $a-\theta <0$, and  in \eqref{contraction} we have $\Theta := \theta-a$. 

To prove uniqueness of the invariant measure, it is sufficient to consider to dynamics governed by \eqref{CBO kappa stat} but starting from two invariant measures $\rho^{\alpha}_{*,1}, \rho^{\alpha}_{*,2}$. Subsequently, the contraction result in  \eqref{contraction} yields 
\begin{equation*}
    \mathbb{W}_{2}(\rho^{\alpha}_{*,1}, \rho^{\alpha}_{*,2})^{2} \leq e^{-\Theta \, t}\,\mathbb{W}_{2}(\rho^{\alpha}_{*,1}, \rho^{\alpha}_{*,2})^{2} \quad \xrightarrow[t\to \infty]{} 0
\end{equation*}
which leads to a contradiction unless $\rho^{\alpha}_{*,1} = \rho^{\alpha}_{*,2}$, hence uniqueness.
\end{proof}

\begin{remark}\label{rmk: configuration 3}
We shall discuss the precise meaning of smallness of the parameters ensuring the validity of Theorem \ref{thm:long time}. 
As can be seen from its proof, the parameters need to verify the following two inequalities
\begin{equation}\label{necessary for contraction}
\begin{aligned}
    & \theta > a \; \Leftrightarrow \; 2\,\lambda - \lambda\,\kappa\, -2\,\sigma^{2} > \kappa\,(\lambda + 2\,\sigma^{2}\,\kappa)\, L_{\mathfrak{m}}^{2}\\
    \text{and } \quad & \theta >0 \; \Leftrightarrow \; 2\,\lambda - \lambda\,\kappa\, -2\,\sigma^{2} > 0
\end{aligned}
\end{equation}
The right-hand side of the first  inequality is $\mathcal{O}(\kappa)$ which means that it can be made arbitrarily small, hence ensuring the validity of the inequality. In order to obtain an example of configuration ensuring the latter inequality, let us choose $\lambda > 2\,\sigma^{2}$, and recall $\kappa \in (0,1)$. Therefore we have $\theta= 2\,\lambda - \lambda\,\kappa\, -2\,\sigma^{2}>0$. So we need $\kappa\,(\lambda + 2\,\sigma^{2}\,\kappa)\, L_{\mathfrak{m}}^{2} < 2\,\lambda - \lambda\,\kappa\, -2\,\sigma^{2}$. A \underline{sufficient condition} for Theorem \ref{thm:long time} to hold is when
\begin{equation*}
    \kappa\,(\lambda + 2\,\sigma^{2}\,\kappa)\, L_{\mathfrak{m}}^{2} < 2\lambda\,\kappa\, L_{\mathfrak{m}}^{2} < 2\,\lambda - \lambda\,\kappa\, -2\,\sigma^{2},
\end{equation*}
that is $ 2\lambda\,\kappa\, L_{\mathfrak{m}}^{2} + \lambda\,\kappa = \kappa\,\lambda (2L_{\mathfrak{m}}^{2} + 1) < 2\lambda - 2\sigma^{2}$, thus it suffices to have $\kappa < \frac{2\lambda - 2\sigma^{2}}{\lambda (2L_{\mathfrak{m}}^{2} + 1)}$.

Conclusion: Taking into account Remark \ref{rmk: configuration} and Remark \ref{rmk: configuation 2}, an example of sufficient conditions guaranteeing existence, uniqueness, and $\mathbb{W}_{2}$--exponential contraction is obtained when
\begin{equation*}
    \delta=1, \quad \lambda> \max\{16\,\sigma^{2}\;,\; 4\} \quad \text{ and } \quad 0<\kappa < \min\left\{\frac{2\lambda - 2\sigma^{2}}{\lambda (2L_{\mathfrak{m}}^{2} + 1)}\; , \; \frac{1}{2(1+b_2)}\right\}
\end{equation*}
where we recall $b_2$ is defined in \eqref{constant b1 b2}, and $L_{\mathfrak{m}}$ is in Lemma \ref{lem: stab}.
\end{remark}

\subsection{Uniform second moment bound}\label{subsec: unif}

The next result guarantees that the invariant measure has a uniform-in-$\alpha$ finite second moment. This will be important when studying the asymptotic $\alpha\to \infty$.

\begin{theorem}\label{thm: inv unif bound}
    In the situation of Theorem \ref{thm:long time}, the unique invariant probability measure $\rho^{\alpha}_{*}$ to the rescaled CBO \eqref{CBO kappa} satisfies 
    \begin{equation}\label{eq:sec}
     \rho_*^\alpha(|\cdot|^2)\leq \rho_0(|\cdot|^2)+C_2\,,
    \end{equation}
    where $C_2$ comes from Theorem \ref{thm: finite moments}
\end{theorem}

\begin{proof}
Recall $\rho_{t}^{\alpha} := \operatorname{Law}(\overline{X}_t)$ the law of \eqref{CBO kappa}. According to Theorem \ref{thm:long time}, one has
\begin{equation*}
    \mathbb{W}_{2}(\rho_{t}^{\alpha}, \rho_{*}^{\alpha})^{2} \; \leq \; \mathbb{W}_{2}(\rho_{0}^{\alpha}, \rho_{*}^{\alpha})^{2}\; e^{- \theta \, t} \quad \forall\, t\geq 0,\, \rho_{0}^{\alpha} \in \mathscr{P}_{2}(\mathbb{R}^{d})
\end{equation*}
where $\theta$ is a positive constant. 
This implies in particular: for any $|\varphi(x)|\leq C(1+|x|^2)$ it holds that
\begin{equation}\label{eq:contraction}
\lim_{t\to \infty} \int_{\RR^d}\varphi(x) \,  \dd\rho_t^\alpha (x)=\int_{\RR^d}\varphi(x) \, \dd\rho_*^\alpha (x)\,.
\end{equation}

Subsequently, for any fixed $r>0$, we can construct a $\mc{C}_c^\infty(\RR^d)$ bump function $\phi_r$ such that it has support on the closed ball $B_{2r}(0)$ satisfying
$|\phi_r(x)|\leq 1$ and is equal to 1 in $B_r(0)$. Then one can use \eqref{eq:contraction} with $\varphi(x) = \phi_{r}(x)|x|^{2}$, and obtain
\begin{equation*}
 \lim_{t\to \infty} \int_{\RR^d}\phi_r(x)|x|^2  \, \dd\rho_t^\alpha (x) = \int_{\RR^d}\phi_r(x)|x|^2  \, \dd\rho_*^\alpha (x)\,\mbox{ for any }r>0\,.
\end{equation*}
Recalling the uniform-in-time estimate in \eqref{eq:uni}, we have
\begin{equation*}
    \int_{\RR^d}\phi_r(x)|x|^2  \, \dd\rho_*^\alpha (x) = \lim_{t\to \infty} \int_{\RR^d}\phi_r(x)|x|^2  \, \dd\rho_t^\alpha (x)\leq \limsup_{t\to \infty} \int_{\RR^d}|x|^2 \,  \dd\rho_t^\alpha (x)\leq \rho_0(|\cdot|^2)+C_2\,.
\end{equation*}
Furthermore, one can use Fatou's lemma and obtain
\begin{equation*}
    \rho_*^\alpha(|\cdot|^2)=\int_{\RR^d}|x|^2 \,  \dd\rho_*^\alpha (x)\leq \liminf_{r\to\infty}\int_{\RR^d}\phi_r(x)|x|^2 \,  \dd\rho_*^\alpha (x)\leq \rho_0(|\cdot|^2)+C_2\,.
\end{equation*}
\end{proof}

\subsection{Harnack's inequality}\label{sec: Harnack}\label{subsec: Harnack}

The result in \eqref{eq:sec} together with \eqref{lemeq2}  ensure that $\mathfrak{m}_{\alpha}(\cdot)$ is well-defined (finite) when evaluated in the invariant probability measure $\rho_*^\alpha$.  Namely, it holds that
\begin{equation}\label{eq:sec m}
 |\mathfrak{m}_{\alpha}(\rho_*^\alpha)|\leq (b_1+b_2\rho_*^\alpha(|\cdot|^2))^{\frac{1}{2}}\leq (b_1+b_2(\rho_0(|\cdot|^2)+C_2))^{\frac{1}{2}}<\infty\,. 
\end{equation}
Therefore, we can introduce the following matrix-valued and vector-valued functions defined for $(x,\mu) \in \RR^d \times \mathscr{P}_{2}(\RR^d)$ by
\begin{equation}\label{eq: A b}
\begin{aligned}
	&A:=(a^{ij})_{i,j}, \quad \text{ such that } \quad a^{ij}(x,\mu):=\frac{\sigma^2}{2}(\delta+|(x-\kappa\,\mathfrak{m}_{\alpha}(\mu))_i|)^2\delta_{ij} \\
    \mbox{ and } & b :=(b^i)_{i},  \quad \text{ such that } \quad b^i(x,\mu):=- \lambda(x-\kappa\,\mathfrak{m}_{\alpha}(\mu))_i
\end{aligned}
\end{equation}
$\delta_{ij}$ being the Kronecker symbol whose value is $0$ whenever $i\neq j$ (not to be confused with the positive constant $\delta$). 
Moreover, the matrix $A$ is positive--definite because it is a diagonal matrix whose diagonal entries satisfy $\frac{\sigma^2}{2}(\delta+|(x-\kappa\,\mathfrak{m}_{\alpha}(\mu))_i|)^2 \geq \frac{(\sigma\delta)^{2}}{2}>0$. 
In particular, when evaluated in the invariant probability measure $\rho_*^\alpha$, one obtains smooth  coefficients (in $\mc{C}^\infty (\RR^d)$)
\begin{align*}
	&A_{*}:=(a^{ij}_{*})_{i,j}, \quad \text{ such that } \quad a^{ij}_{*}(x) := a^{ij}(x,\rho_*^\alpha)\\
    \mbox{ and } & b_{*} :=(b^i_{*})_{i}, \quad \text{ such that } \quad b^i_{*}(x) := b^i(x,\rho_*^\alpha).
\end{align*} 
This identification is possible since the dependence of $A$ and $b$ on the measure is non-local -- that is, it involves a quantity of the form $\int_{\RR^d} K(y) \dd\mu(y)$ which is moreover constant in $x$ (since the kernel $K$ does not involve $x$)--, rather than a pointwise (local) evaluation of (the density of) the measure $\mu$. In other words, regularity of (the density of) the measure $\mu$ does not influence regularity of the coefficients $A$ and $b$.

More generally, given a measure $\nu \in \mathscr{P}_{2}(\RR^d)$, the quantity $|\mathfrak{m}_{\alpha}(\nu)|$ is well-defined (finite) thanks to \eqref{lemeq2}, and we can define the following functions on $\RR^{d}$
\begin{align*}
	&A_{\nu}:=(a^{ij}_{\nu})_{i,j}, \quad \text{ such that } \quad a^{ij}_{\nu}(x) := a^{ij}(x,\nu)\\
    \mbox{ and } & b_{\nu} :=(b^i_{\nu})_{i}, \quad \text{ such that } \quad b^i_{\nu}(x) := b^i(x,\nu).
\end{align*}
The diffusion (linear) operator whose coefficients are $A_{\nu}$ and $b_{\nu}$ is given by
\begin{equation*}
    L_{A,b,\nu}\psi(x) =  \text{trace}\big(A_{\nu}(x)D^2\psi(x)\big) + 
    b_{\nu}(x) \cdot \nabla\psi(x) \quad \forall \; \psi \in \mc{C}_c^\infty (\RR^d),
\end{equation*}
Its adjoint is denoted by $L_{A,b,\nu}^{*}$, and a measure $\mu$ is solution to $L_{A,b,\nu}^{*}\mu=0$ if it is a solution in the distributional sense that is
\begin{equation*}
    \int_{\RR^d} L_{A,b,\nu}\psi(x)\; \dd \mu(x) =0 \quad \forall \; \psi \in \mc{C}_c^\infty (\RR^d).
\end{equation*}
The diffusion (linear) operator whose coefficients are $A_{*}$ and $b_{*}$ will be denoted by $L_{A,b,*}$, and its adjoint is $L_{A,b,*}^{*}$. It should be noted that the choice of the test functions being $\psi \in \mc{C}_c^\infty (\RR^d)$ can be relaxed to include a larger space of functions but which we shall not do here for the sake of simplicity. 

When we allow the coefficients $A$ and $b$ to be also functions of the measure argument, the resulting diffusion operator is non-linear. Analogously, we say that $\mu$ is solution to $L_{A,b,\mu}^{*}\mu=0$ if it is a solution in the distributional sense that is
\begin{equation*}
    \int_{\RR^d} L_{A,b,\mu}\psi(x)\; \dd \mu(x) =0 \quad \forall \; \psi \in \mc{C}_c^\infty (\RR^d).
\end{equation*}

The following result is well-known and can be found for example in \cite{bogachev2022fokker}. 

\begin{theorem}\label{thm general}
    Let $\nu \in \mathscr{P}_2(\RR^d)$ be fixed. Let $A_{\nu}$ and $b_{\nu}$ be as defined above. Let $\mu$ be a locally finite Borel measure satisfying $L_{A,b,\nu}^{*}\mu=0$. Then $\mu$ has a density denoted by $\varrho$ in $W_{\text{loc}}^{1,p}(\RR^d)$ with $(p>d)$ that is locally H\"{o}lder continuous. If moreover $\mu$ is nonnegative, then the continuous version of the density $\varrho$ of $\mu$ satisfies Harnack's inequality: for every compact set $K$ contained in a connected set $U$, it holds 
    \begin{equation*}
        \sup_{K}\varrho \;\leq\; C\, \inf_{K}\varrho
    \end{equation*}
    where $C$ depends only on $\|a^{ij}_{\nu}\|_{W^{1,p}(U)}$, $\|b^{i}_{\nu}\|_{L^{p}(U)}$, $\inf_{U}\det(A_{\nu})$, and $K$.
\end{theorem}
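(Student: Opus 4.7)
My plan is to reduce the statement to two well-known ingredients from the theory of stationary Fokker--Planck--Kolmogorov equations, as developed systematically in \cite{bogachev2022fokker}: a local $W^{1,p}$ regularity theorem for solutions of $L^{*}\mu=0$ with Sobolev/integrable coefficients, and a Harnack inequality for nonnegative continuous solutions of such equations under local nondegeneracy of the second-order part. The key observation making the reduction clean is that, because $\mathfrak{m}_{\alpha}(\nu)$ is a vector computed as an integral against $\nu$ and therefore \emph{constant} in the spatial variable $x$, once $\nu$ is frozen the coefficients $A_{\nu}$ and $b_{\nu}$ become ordinary (nonlinear-in-$\mu$-free) functions of $x$ alone. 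In particular, no regularity whatsoever of any density of $\nu$ is needed; only the finiteness of $\mathfrak{m}_{\alpha}(\nu)$, which is already guaranteed for $\nu\in\mathscr{P}_2(\RR^d)$ by Lemma \ref{lem: useful estimates}.

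I would first verify the structural hypotheses required by the cited theorems. The drift $b_{\nu}(x)=-\lambda(x-\kappa\,\mathfrak{m}_{\alpha}(\nu))$ is affine in $x$, hence lies in $L^{p}_{\text{loc}}(\RR^d)$ for every $p\geq 1$. Each diagonal entry $a^{ii}_{\nu}(x)=\frac{\sigma^2}{2}(\delta+|(x-\kappa\,\mathfrak{m}_{\alpha}(\nu))_i|)^{2}$ is Lipschitz continuous in $x$ (a smooth function composed with $|\cdot|$ and with an affine map), so $a^{ij}_{\nu}\in W^{1,\infty}_{\text{loc}}(\RR^d)\subset W^{1,p}_{\text{loc}}(\RR^d)$ for every $p>d$. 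Finally, the strictly positive constant $\delta>0$ yields the uniform lower bound $a^{ii}_{\nu}(x)\geq \frac{\sigma^2\delta^2}{2}$, so that $A_{\nu}$ is uniformly elliptic on every bounded open set $U$, and in particular $\inf_{U}\det(A_{\nu})>0$.

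With these three ingredients in hand, the first assertion follows from the general regularity result for stationary Fokker--Planck equations with Sobolev coefficients (see, e.g., \cite[Chapter 1]{bogachev2022fokker}): any locally finite Borel solution $\mu$ of $L_{A,b,\nu}^{*}\mu=0$ is absolutely continuous with respect to Lebesgue measure, with a density $\varrho\in W^{1,p}_{\text{loc}}(\RR^d)$ for $p>d$; Morrey's embedding then delivers local H\"{o}lder continuity of $\varrho$. For the second assertion, once $\mu\geq 0$ and $\varrho$ denotes the continuous representative, I would invoke directly the corresponding Harnack inequality from \cite{bogachev2022fokker}; alternatively, one may pass to the divergence form via integration by parts in the test $\psi\in\mc{C}^{\infty}_{c}(\RR^d)$ and apply a Krylov--Safonov/Trudinger-type Harnack bound. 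In either route, the dependence of the constant $C$ on $\|a^{ij}_{\nu}\|_{W^{1,p}(U)}$, $\|b^{i}_{\nu}\|_{L^{p}(U)}$, $\inf_{U}\det A_{\nu}$, and the geometry of $K\subset U$ is exactly what the cited results provide.

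I do not anticipate any genuine analytic obstacle: the main conceptual step is simply recognizing that freezing $\nu$ linearizes the operator and brings it squarely into the scope of these classical theorems, after which the argument is essentially bookkeeping. The only pitfall to be careful about is making sure one uses a version of the regularity/Harnack statement that allows the drift to be merely $L^{p}_{\text{loc}}$ (rather than bounded) and the diffusion to be merely $W^{1,p}_{\text{loc}}$ (rather than $\mc{C}^{1}$), since our $a^{ij}_{\nu}$ is only Lipschitz due to the absolute values; this is precisely the generality in which \cite{bogachev2022fokker} formulates the result, which is why the author can apply it as a black box.
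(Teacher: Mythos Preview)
Your proposal is correct and follows essentially the same route as the paper, which simply cites \cite[Corollary~1.6.7 and Corollary~1.7.2]{bogachev2022fokker} after observing that the frozen coefficients are regular and $A_{\nu}$ is positive--definite. You are in fact more careful than the paper: you correctly note that $a^{ij}_{\nu}$ is only Lipschitz (not $\mc{C}^{\infty}$ as the paper asserts) because of the absolute values in $D(\cdot)$, which is exactly why the $W^{1,p}_{\text{loc}}$ generality of the Bogachev results matters.
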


\begin{proof}
First observe that the coefficients $A_{\nu}$ and $b_{\nu}$ are smooth, and $A_{\nu}$ is positive--definite thanks to  $\delta>0$ in the diffusion. Then,  existence and regularity of the density is \cite[Corollary 1.6.7]{bogachev2022fokker}. Harnack's inequality is \cite[Corollary 1.7.2]{bogachev2022fokker}.
\end{proof}

\begin{corollary}\label{cor: Harnack}
The unique invariant probability measure $\rho_*^\alpha$ to the rescaled CBO \eqref{CBO kappa} has a density (still denoted by $\rho_*^\alpha$) in $W_{\text{loc}}^{1,p}(\RR^d)$ with $(p>d)$ that is locally H\"{o}lder continuous. Moreover, its continuous version satisfies Harnack's inequality: for any ball $B_r(x_0):= \big\{x\in\RR^{d}: |x-x_0|\leq r\big\}$ with $x_0\in \RR^d$, one has
\begin{equation}\label{eq:harnack}
    \sup_{x\in B_r(x_0)}\rho_*^\alpha(x) \leq C \inf_{x\in B_r(x_0)}\rho_*^\alpha(x)
\end{equation}
where $C$ depends only on $\|a^{ij}\|_{W^{1,p}(B_{2r}(x_0))}$, $\|b^{i}\|_{L^{p}(B_{2r}(x_0))}$ and $\inf_{B_{2r}(x_0)}\det(A)$.
\end{corollary}

\begin{proof}
Existence and uniqueness of an invariant probability measure $\rho_*^\alpha$ is obtained in \cite[Proposition 3.4 \& Corollary 3.8]{huang2024self}, that is, $\rho_*^\alpha$ is the unique solution to the (non-linear) equation $L_{A,b,\mu}^{*}\mu=0$ in the class of probability measures. Moreover, \eqref{eq:sec} ensures that $\rho_*^\alpha$ has a finite second moment. Therefore, we can consider the diffusion (linear) operator $L_{A,b,*}$ and we are in the setting of Theorem \ref{thm general} which gives the desired result when we choose the solution to $L_{A,b,*}^{*}\mu=0$ within the probability measures. Indeed in this case, the unique probability measure $\mu$ satisfying $L_{A,b,*}^{*}\mu=0$ is $\rho_*^\alpha$. Finally, the sets $U$ and $K$ in Theorem \ref{thm general} are chosen as the closed balls $B_{2r}(x_0)$ and $B_r(x_0)$ respectively. 
\end{proof}

\begin{lemma}\label{lem:mass}
Assume $f(\cdot)$ satisfies Assumption \ref{assum1}, and let $\rho_*^\alpha$ be the unique invariant probability measure of the rescaled CBO \eqref{CBO kappa}. Let $x_{\circ} \in\RR^d$ be an arbitrarily fixed point, then for any $\varepsilon>0$ there exists some $C_\varepsilon>0$ depending only on $\rho_0(|\cdot|^2),\, C_2, \, x_{\circ}$ and $\varepsilon$ such that
\begin{equation}
    \rho_*^\alpha(B_{\varepsilon}(x_{\circ}))\geq C_\varepsilon,
\end{equation}
with the constant $C_2$ being the one from Theorem \ref{thm: finite moments}.
\end{lemma}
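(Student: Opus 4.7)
The plan is to combine the uniform second-moment bound \eqref{eq:sec} (via a Chebyshev-type argument) with Harnack's inequality \eqref{eq:harnack} to propagate a lower bound on the mass from a large centered ball to the arbitrary ball $B_\varepsilon(x_{\circ})$.

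\textbf{Step 1: Localize mass in a large centered ball.} By Chebyshev's inequality together with \eqref{eq:sec}, for any $R>0$,
\begin{equation*}
\rho_*^\alpha\big(\RR^d\setminus B_R(0)\big) \;\leq\; \frac{\rho_*^\alpha(|\cdot|^2)}{R^2}\;\leq\; \frac{\rho_0(|\cdot|^2)+C_2}{R^2}.
\end{equation*}
Choosing $R=R_0:=\sqrt{2(\rho_0(|\cdot|^2)+C_2)}$ yields $\rho_*^\alpha(B_{R_0}(0))\geq \tfrac12$. Note that $R_0$ depends only on $\rho_0(|\cdot|^2)$ and $C_2$, not on $\alpha$.

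\textbf{Step 2: Pointwise lower bound on the density somewhere in $B_{R_0}(0)$.} Since $\rho_*^\alpha$ admits a continuous density (again still denoted $\rho_*^\alpha$), and $B_{R_0}(0)$ is compact, the sup of the density over this ball is attained. Averaging gives the crude estimate
\begin{equation*}
\sup_{x\in B_{R_0}(0)}\rho_*^\alpha(x)\;\geq\; \frac{1}{|B_{R_0}(0)|}\int_{B_{R_0}(0)}\rho_*^\alpha(x)\,\dd x \;\geq\; \frac{1}{2\,|B_{R_0}(0)|}.
\end{equation*}

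\textbf{Step 3: Propagate via Harnack's inequality to $B_\varepsilon(x_{\circ})$.} Set $R_1:=R_0+|x_{\circ}|+\varepsilon+1$, so that both $B_{R_0}(0)$ and $B_\varepsilon(x_{\circ})$ are contained in $B_{R_1}(0)$. Applying Harnack's inequality \eqref{eq:harnack} on $B_{R_1}(0)$ yields a constant $C^{*}$ such that
\begin{equation*}
\sup_{x\in B_{R_1}(0)}\rho_*^\alpha(x)\;\leq\; C^{*}\inf_{x\in B_{R_1}(0)}\rho_*^\alpha(x).
\end{equation*}
Combining with Step~2, this gives
\begin{equation*}
\inf_{x\in B_\varepsilon(x_{\circ})}\rho_*^\alpha(x)\;\geq\; \inf_{x\in B_{R_1}(0)}\rho_*^\alpha(x)\;\geq\; \frac{1}{C^{*}}\sup_{x\in B_{R_0}(0)}\rho_*^\alpha(x)\;\geq\; \frac{1}{2\,C^{*}\,|B_{R_0}(0)|},
\end{equation*}
and therefore
\begin{equation*}
\rho_*^\alpha(B_\varepsilon(x_{\circ}))\;\geq\; \frac{|B_\varepsilon(x_{\circ})|}{2\,C^{*}\,|B_{R_0}(0)|}\;=:\;C_\varepsilon.
\end{equation*}

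\textbf{Main obstacle.} The only delicate point is verifying that $C^{*}$ can be chosen uniformly in $\alpha$, since it depends on $\|a^{ij}\|_{W^{1,p}(B_{2R_1}(0))}$, $\|b^{i}\|_{L^{p}(B_{2R_1}(0))}$, and $\inf_{B_{2R_1}(0)}\det(A)$, each of which involves $\kappa\,\mathfrak{m}_\alpha(\rho_*^\alpha)$. This is where the bound \eqref{eq:sec m} is crucial: it provides an $\alpha$-independent control on $|\mathfrak{m}_\alpha(\rho_*^\alpha)|$ (using that $b_2$ can be taken independent of $\alpha$ for $\alpha\geq 1$, per the remark following Lemma~\ref{lem: useful estimates}). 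Since $a^{ij}$ and $b^{i}$ are smooth in $x$ and depend on $\mathfrak{m}_\alpha(\rho_*^\alpha)$ only through a bounded, $x$-independent shift, the norms and the lower bound on $\det(A)$ on $B_{2R_1}(0)$ are controlled by $R_1$, the fixed parameters $(\lambda,\sigma,\delta,\kappa)$, and the $\alpha$-uniform bound on $|\mathfrak{m}_\alpha(\rho_*^\alpha)|$. This yields the claimed dependence of $C_\varepsilon$ only on $\rho_0(|\cdot|^2)$, $C_2$, $x_{\circ}$, and $\varepsilon$.
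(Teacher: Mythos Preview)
Your proof is correct and follows essentially the same route as the paper: localize mass via the second-moment bound \eqref{eq:sec} and Chebyshev, turn this into a pointwise lower bound on the density by averaging, then use Harnack \eqref{eq:harnack} to propagate to $B_\varepsilon(x_\circ)$, checking $\alpha$-independence of the Harnack constant through \eqref{eq:sec m}. The only cosmetic difference is that the paper runs the whole argument on balls centered at $x_\circ$, whereas you localize at the origin first and then enclose both $B_{R_0}(0)$ and $B_\varepsilon(x_\circ)$ in a single larger ball $B_{R_1}(0)$ on which Harnack is applied; this makes your Chebyshev step slightly cleaner (it directly matches the centered second moment) at the expense of a larger Harnack domain.
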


\begin{proof}
For any $\varepsilon>0$, let us consider some $R>\varepsilon$ and use Harnack's inequality  \eqref{eq:harnack}
\begin{align*}
    \sup\limits_{B_R(x_{\circ})}\rho_*^\alpha & \leq C_{B_{2R}(x_{\circ})}\inf\limits_{B_R(x_{\circ})}\rho_*^\alpha \\
    & \leq  C_{B_{2R}(x_{\circ})}\inf\limits_{B_{\varepsilon}(x_{\circ})}\rho_*^\alpha  
     \leq  C_{B_{2R}(x_{\circ})}\frac{1}{|B_{\varepsilon}(x_{\circ})|}\rho_*^\alpha(B_{\varepsilon}(x_{\circ}))\,.
\end{align*}
Here $C_{B_{2R}(x_{\circ})}$ is a positive constant depending on  
$\|a^{ij}\|_{W^{1,p}(B_{2R}(x_0))}$, $\|b^{i}\|_{L^{p}(B_{2R}(x_0))}$ and $\inf_{B_{2R}(x_0)}\det(A)$. Using \eqref{eq:sec m}, it is not difficult to see that these quantities depend only on $B_{2R}(x_{\circ})$ and on the constants $\delta, \kappa, \lambda, \sigma, R, b_1, b_2, C_2$ which are independent from $\alpha$.

On the other hand, it is easy to check that
\begin{align*}
    \sup\limits_{B_R(x_{\circ})}\rho_*^\alpha& \geq \frac{1}{|B_R(x_{\circ})|}\rho_*^\alpha(B_R(x_{\circ})) \\ 
    & \geq \frac{1}{|B_R(x_{\circ})|}(1-\frac{\rho_*^\alpha(|\cdot|^2)}{R^2})  \\ 
    & \geq  \frac{1}{|B_R(x_{\circ})|}(1-\frac{\rho_0(|\cdot|^2)+C_2}{R^2})
\end{align*}
where we have used Markov's inequality in the second inequality, and \eqref{eq:sec} in the last inequality. We can now take $R$ large enough such that $1-\frac{\rho_0(|\cdot|^2)+C_2}{R^2} \geq \frac{1}{2}$, and obtain 
\begin{equation*}
    \sup\limits_{B_R(x_{\circ})}\rho_*^\alpha\geq \frac{1}{2|B_R(x_{\circ})|}\,,
\end{equation*}
which together with the previous inequalities imply
\begin{equation*}
    \rho_*^\alpha(B_{\varepsilon}(x_{\circ}))\geq  \frac{|B_{\varepsilon}(x_{\circ})|}{2 \, C_{B_{2R}(x_{\circ})}|B_R(x_{\circ})|} =: C_\varepsilon >0.
\end{equation*}
\end{proof}

\section{Laplace's principle and global convergence}\label{sec: convergence}

We are now ready to state and prove our main results concerning the limit $\alpha \to \infty$. We shall first treat the case where the function $f(\cdot)$ to be minimized has a unique  minimizer in \S \ref{subsec: single min}. Then, and although CBO methods are primarily designed for objective functions with a single minimizer, we are able to treat in \S \ref{subsec: multi min} the more general situation where $f(\cdot)$ has multiple minimizing regions. 

Let us recall that  our results hold without any prior assumptions on the initial distribution $\rho_0\in\mathscr{P}_{4}(\mathbb{R}^{d})$ of \eqref{CBO kappa}. In particular, we do not assume that a global minimizer of $f(\cdot)$ is contained within the support of $\rho_0$. 
This absence of conditions on the initial distribution is precisely what ensures the \textit{global} nature of the convergence. 

\subsection{Single--minimizer}\label{subsec: single min}

\begin{assum}\label{assum2}
$f(\cdot)$ has a unique global minimizer $x_*$.
\end{assum}

The following observation is useful for the proof of the subsequent theorem. 

\begin{remark}
Thanks to the continuity and coercivity of $f(\cdot)$ in Assumption \ref{assum1}, for all arbitrary small $\varepsilon>0$ it holds that 
\begin{equation}\label{eq:min isolated}
    \inf\{f(x): |x-x_*|\geq \varepsilon\} >f(x_*)=:\underline f\,.
\end{equation}
 See also Remark \ref{rmk:flat} for further references.  
\end{remark}

The following result is our first Laplace's principle in the case of a unique minimizer.

\begin{theorem}\label{thm:conver}
    Assume $f(\cdot)$ satisfies Assumptions \ref{assum1} and \ref{assum2}, and let $\rho_*^\alpha$ be the unique invariant probability measure of the rescaled CBO \eqref{CBO kappa}. Define the probability measure
    \begin{equation}\label{eq:eta}
        \eta_*^\alpha(A):=\frac{\int_{A}e^{-\alpha f(x)}\rho_*^\alpha(\dd x)}{\int_{\RR^d}e^{-\alpha f(y)}\rho_*^\alpha(\dd y)},\quad A\in \mc{B}(\RR^d).
    \end{equation}
    Then for any $\varepsilon>0$, it holds 
    \begin{equation*}
        \eta_*^\alpha(\{x \in \RR^d :|x-x_*|\geq \varepsilon\})\longrightarrow 0  \quad \mbox{ as }  \quad \alpha \to \infty.
    \end{equation*}
\end{theorem}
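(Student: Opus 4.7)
The statement is a Laplace--principle--type estimate for the reweighted measure $\eta_*^\alpha$ on the complement of a neighborhood of $x_*$. The plan is to split the quotient defining $\eta_*^\alpha$ into numerator and denominator and control each separately, using two distinct ingredients: the isolation of the minimum \eqref{eq:min isolated} for the numerator, and the $\alpha$--uniform lower bound on the mass of $\rho_*^\alpha$ near any fixed point provided by Lemma \ref{lem:mass} for the denominator.

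For the numerator, by \eqref{eq:min isolated} there exists $\eta=\eta(\varepsilon)>0$ such that $f(x)\geq \underline f+\eta$ for every $x$ with $|x-x_*|\geq \varepsilon$. Pulling this bound out of the integral immediately gives
\begin{equation*}
\int_{|x-x_*|\geq \varepsilon} e^{-\alpha f(x)}\,\rho_*^\alpha(\dd x)\;\leq\; e^{-\alpha(\underline f+\eta)}\cdot \rho_*^\alpha(\{|x-x_*|\geq \varepsilon\})\;\leq\; e^{-\alpha(\underline f+\eta)}.
\end{equation*}

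For the denominator, the standard Laplace argument would need $x_*$ to lie in the support of the averaging measure, uniformly in $\alpha$. This is precisely what Lemma \ref{lem:mass}, itself a consequence of Harnack's inequality \eqref{eq:harnack}, provides. Concretely, I would pick any $\eta'\in(0,\eta)$; by the (local) Lipschitz continuity of $f$ there exists $r=r(\eta')>0$ such that $f(x)\leq \underline f+\eta'$ on the ball $B_r(x_*)$. Restricting the integration domain and applying Lemma \ref{lem:mass} with $x_\circ=x_*$ yields a constant $C_r>0$ independent of $\alpha$ with
\begin{equation*}
\int_{\RR^d} e^{-\alpha f(y)}\,\rho_*^\alpha(\dd y)\;\geq\; \int_{B_r(x_*)} e^{-\alpha f(y)}\,\rho_*^\alpha(\dd y)\;\geq\; e^{-\alpha(\underline f+\eta')}\,\rho_*^\alpha(B_r(x_*))\;\geq\; C_r\,e^{-\alpha(\underline f+\eta')}.
\end{equation*}

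Combining the two bounds gives $\eta_*^\alpha(\{|x-x_*|\geq \varepsilon\})\leq C_r^{-1}\,e^{-\alpha(\eta-\eta')}$, and choosing e.g.\ $\eta'=\eta/2$ makes this tend to zero (in fact exponentially fast) as $\alpha\to\infty$. The only delicate step is the denominator estimate: it requires a mass lower bound on $\rho_*^\alpha$ in a neighborhood of $x_*$ that does not deteriorate as $\alpha\to\infty$. That is exactly the content of Lemma \ref{lem:mass}, so here the substantive work has already been done---through the uniform second--moment bound of Theorem \ref{thm: finite moments} and Harnack's inequality---and the present argument only needs to assemble these pieces with the classical Laplace estimate.
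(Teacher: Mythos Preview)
Your proof is correct and follows essentially the same approach as the paper's: both split the quotient into numerator and denominator, use the isolation property \eqref{eq:min isolated} to get an exponential upper bound on the numerator, and invoke Lemma \ref{lem:mass} (the $\alpha$--uniform mass lower bound from Harnack's inequality) to bound the denominator from below near $x_*$. The only difference is cosmetic---the paper normalizes by $e^{\alpha \overline f_{\varepsilon'}}$ and works with the quantities $\overline f_{\varepsilon'},\underline f_\varepsilon$, whereas you parametrize directly by the gaps $\eta,\eta'$---but your $\eta$ is the paper's $\underline f_\varepsilon-\underline f$, your $\eta'$ is $\overline f_{\varepsilon'}-\underline f$, and your choice $\eta'=\eta/2$ is exactly the paper's condition \eqref{diff f eps}.
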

\begin{proof}
Let $\varepsilon>0$ be fixed, and let us define
\begin{equation*}
    \overline f_\varepsilon:=\sup\{f(x): |x-x_*|\leq \varepsilon\} \mbox{ and } \underline f_\varepsilon:=\inf\{f(x): |x-x_*|\geq \varepsilon\}. 
\end{equation*}
Since $f(\cdot)$ is continuous at $x_*$ we know that for any $\varepsilon>0$, there exists some $\varepsilon'>0$ such that
\begin{equation}\label{diff f eps}
    \overline f_{\varepsilon'} \leq \frac{1}{2} (\underline f_\varepsilon + \underline f), \mbox{ that is }\quad
    2(\overline f_{\varepsilon'}-\underline f) \leq (\underline f_\varepsilon-\underline f)\,.
\end{equation}

Then we compute
\begin{align*}
    \eta_*^\alpha (\{x \in \RR^d :|x-x_*|\geq \varepsilon\} )
    & =
   \frac{\int_{|x-x_*|\geq \varepsilon}e^{-\alpha f(x)} \,\rho_*^\alpha(\dd x)}{\int_{\RR^d}e^{-\alpha f(x)} \, \rho_*^\alpha(\dd x)}\\
   & \leq \frac{\int_{|x-x_*|\geq \varepsilon}e^{-\alpha (f(x)-\overline f_{\varepsilon'})} \, \rho_*^\alpha(\dd x)}{\int_{|x-x_*|\leq \varepsilon'}e^{-\alpha (f(x)-\overline f_{\varepsilon'})} \, \rho_*^\alpha(\dd x)}.
\end{align*} 

\textbullet\, In $\{x\in \RR^d : |x-x_*|\geq \varepsilon\}$, we have $f(x) \geq \underline f_\varepsilon$, hence $e^{-\alpha (f(x)-\overline f_{\varepsilon'})} \leq e^{-\alpha (\underline f_\varepsilon -\overline f_{\varepsilon'})}$. Moreover, we have $\underline f_\varepsilon -\overline f_{\varepsilon'} = (\underline f_\varepsilon - \underline f) - (\overline f_{\varepsilon'} - \underline f) \geq (\overline f_{\varepsilon'} - \underline f)>0$ using \eqref{diff f eps}. Therefore, we have 
\begin{align*}
    e^{-\alpha (f(x)-\overline f_{\varepsilon'})} \leq e^{-\alpha (\underline f_\varepsilon -\overline f_{\varepsilon'})} \leq e^{-\alpha (\overline f_{\varepsilon'} - \underline f)} 
\end{align*}
and $\rho_*^\alpha$ being a probability measure, we have 
\begin{align*}
    \int_{|x-x_*|\geq \varepsilon}e^{-\alpha (f(x)-\overline f_{\varepsilon'})} \, \rho_*^\alpha(\dd x)  \leq e^{-\alpha (\overline f_{\varepsilon'} - \underline f)} \; \to 0 \; \mbox{ when } \;\alpha \to \infty.
\end{align*}

\textbullet\, In $\{x\in \RR^d : |x-x_*|\leq \varepsilon'\}$, we have $f(x) \leq \overline f_{\varepsilon'}$, hence $e^{-\alpha (f(x)-\overline f_{\varepsilon'})} \geq 1$, and
\begin{align*}
    \int_{|x-x_*|\leq \varepsilon'}e^{-\alpha (f(x)-\overline f_{\varepsilon'})} \, \rho_*^\alpha(\dd x)  \geq \rho_*^\alpha(B_{\varepsilon'}(x_*)) \geq  C_{\varepsilon'}>0
\end{align*}
where in the last inequality $C_{\varepsilon'}$ comes from Lemma \ref{lem:mass}, which is independent of $\alpha$.

Finally, one gets
\begin{align*}
    \eta_*^\alpha(\{x \in \RR^d :|x-x_*|\geq \varepsilon\} )
   &\leq \frac{e^{-\alpha (\overline f_{\varepsilon'} - \underline f)}}{C_{\varepsilon'}}\,\; \to 0 \; \mbox{ when } \;\alpha \to \infty.
\end{align*}

\end{proof}

We can now readily state and prove the main result when the function $f(\cdot)$ to be minimized is assumed to have a unique global minimizer. 

\begin{corollary}\label{cor: conv to min}
Assume $f(\cdot)$ satisfies Assumption \ref{assum1}. 
Let $\OX_.$ be the solution to the rescaled CBO \eqref{CBO kappa}, and let $\rho_*^\alpha$ be its unique invariant probability measure. Then 
\begin{equation*}
    \lim_{t\to \infty} \; \EE\left[\,\OX_t\,\right]=\int_{\RR^d}x\rho_*^\alpha(\dd x) =\kappa\, \mathfrak{m}_{\alpha}(\rho_*^\alpha)
    =\kappa\,\lim\limits_{t\to \infty} \mathfrak{m}_{\alpha}(\rho^{\alpha}_{t}).
\end{equation*}
If moreover $f(\cdot)$ satisfies Assumption \ref{assum2}, then 
\begin{equation*}
    \lim_{\alpha \to \infty} \lim_{t\to \infty} \; \kappa^{-1}\,\EE\left[\,\OX_t\,\right]
    =\lim_{\alpha \to \infty} \lim\limits_{t\to \infty} \mathfrak{m}_{\alpha}(\rho^{\alpha}_{t})
    = \,\lim_{\alpha \to \infty} \mathfrak{m}_{\alpha}(\rho_*^\alpha)
= \, x_*.
\end{equation*}
\end{corollary}

\begin{proof}
Let us first recall that existence and uniqueness of the invariant measure $\rho^{\alpha}_{*}$ are given by Theorem \ref{thm: existence} and Theorem \ref{thm:long time} (see also Remark \ref{rmk: configuration 3}). \\
By definition, a probability measure $\rho_*^\alpha$ is \textit{invariant} for $\OX_{\cdot}$ satisfying \eqref{CBO kappa} if and only if it is a fixed point for the adjoint of its (nonlinear) transition semigroup $\{T_{t}\}_{t\geq 0}$, that is $T^{*}_{t}\rho_*^\alpha = \rho_*^\alpha$ for all $t> 0$; the adjoint operator $T^{*}_{t}$ being defined on the space of probability measures as $T^{*}_{t}\nu := \text{Law}(\OX_{t})$ when $\nu=\text{Law}(\OX_{0})$. 
Recalling the dynamics \eqref{CBO kappa}, we have
\begin{equation}\label{eq:to take expectation}
\begin{aligned}
    \OX_t 
    & = \OX_0-\lambda \int_0^t(\OX_s-\kappa\,\mathfrak{m}_{\alpha}(\rho^{\alpha}_s)) \, \ds + \sigma \int_0^t \left(\delta\,\mathds{I}_{d} + D(\OX_t - \kappa\, \mathfrak{m}_{\alpha}(\rho^{\alpha}_s)) \right) \, \rd B_s \\
    & =  \OX_0-\lambda \int_0^t(\OX_s-\kappa\,\mathfrak{m}_{\alpha}(\rho^{\alpha}_s)) \, \ds +  \text{martingale (thanks to \eqref{eq:uni} and \eqref{eq:sec m})}
\end{aligned}
\end{equation}
where  $\rho^{\alpha}_s:= \text{Law}(\OX_{s})$. 
In particular, when choosing as initial distribution $\text{Law}(\OX_{0}) = \rho_*^\alpha$, then it holds
$\text{Law}(\OX_{t}) = \rho_*^\alpha$ for all $t> 0$ since the latter probability is invariant. 

Let us now consider the (stationary) process \eqref{eq:to take expectation} with initial distribution $\OX_{0}\sim \rho_*^\alpha$. It is well-posed thanks to \eqref{eq:sec m}.  Then taking the expectation\footnote{Note that this is why the value of $\delta>0$ does not influence the rest of the proof.} in \eqref{eq:to take expectation} and bearing in mind the invariance of  $\rho_*^\alpha$, one has
\begin{align*}
    \int_{\RR^d}x\rho_*^\alpha (\dd x)=\int_{\RR^d}x\rho_*^\alpha(\dd x)-\lambda t \left(\int_{\RR^d}x\rho_*^\alpha(\dd x)-\kappa\,\mathfrak{m}_{\alpha}(\rho_*^\alpha)\right)\,
\end{align*}
which further implies that
\begin{equation*}
	\int_{\RR^d}x\rho_*^\alpha(\dd x) = \kappa \,\mathfrak{m}_{\alpha}(\rho_*^\alpha).
\end{equation*}
We can now consider (using again the same notation) the process \eqref{eq:to take expectation} but with an arbitrary initial distribution $\OX_{0}\sim \rho_0 \in\mathscr{P}_{4}(\RR^d)$. Then, the result in \eqref{eq:contraction} yields
\begin{equation*}
	\lim_{t\to \infty} \; \EE\left[\,\OX_t\,\right] = \lim_{t\to \infty} \;
    \int_{\RR^d}x\rho_t^\alpha (\dd x) =
    \int_{\RR^d}x\rho_*^\alpha (\dd x)
\end{equation*}
which together with the previous equality yields $\lim_{t\to \infty} \; \EE\left[\,\OX_t\,\right]=\kappa\, \mathfrak{m}_{\alpha}(\rho_*^\alpha)$. Finally, using the conclusion of Corollary \ref{cor: long time of m}, one obtains the first statement.

Next, to prove the limit when $\alpha\to \infty$, we introduce $Y_*^\alpha$ as a random variable with probability distribution $\eta_*^\alpha$ defined in \eqref{eq:eta}. Then we have
\begin{align*}
  |\mathfrak{m}_{\alpha}(\rho_*^\alpha)-x_*|&\leq \EE[|Y_*^\alpha-x_*|]=\int_{\RR^d}|x-x_*|\eta_*^\alpha(\dd x)\\
  &=\int_{|x-x_*|\leq \varepsilon}|x-x_*|\eta_*^\alpha(\dd x)+\int_{|x-x_*|\geq \varepsilon}|x-x_*|\eta_*^\alpha(\dd x)\\
  &\leq \varepsilon+ \bigg(\eta_*^\alpha(\{x:|x-x_*|\geq \varepsilon\}) \bigg)^{\frac{1}{2}} \left(\int_{|x-x_*|\geq \varepsilon}|x-x_*|^2\eta_*^\alpha(\dd x) \right)^{\frac{1}{2}}\\
  &\leq \varepsilon + \bigg(\eta_*^\alpha(\{x:|x-x_*|\geq \varepsilon\}) \bigg)^{\frac{1}{2}} \bigg( 2 |x_*|^2 + 2\,\eta_*^\alpha(|\cdot|^2)\bigg)^{\frac{1}{2}}
\end{align*}
Recalling \eqref{lemeq2} and \eqref{eq:sec}, we have 
\begin{equation}\label{bound second moment eta}
  \eta_*^\alpha(|\cdot|^2)\leq b_1+b_2\rho_*^\alpha(|\cdot|^2)\leq b_1+b_2(\rho_0(|\cdot|^2)+C_2)
\end{equation}
and then 
\begin{equation*}
    |\mathfrak{m}_{\alpha}(\rho_*^\alpha)-x_*|\leq \varepsilon+\bigg(\eta_*^\alpha(\{x:|x-x_*|\geq \varepsilon\}) \bigg)^{\frac{1}{2}} \bigg(2 |x_*|^2 + 2b_1 + 2b_2(\rho_0(|\cdot|^2)+C_2)\bigg)^{\frac{1}{2}}.
\end{equation*}
Thus, we can take $\alpha\to\infty$ while using Theorem \ref{thm:conver}, and obtain
\begin{equation*}
   \lim_{\alpha \to \infty}\, |\mathfrak{m}_{\alpha}(\rho_*^\alpha)-x_*|\leq \varepsilon
\end{equation*}
which holds for any arbitrarily chosen $\varepsilon>0$. In particular, when letting $\varepsilon \to 0$, we obtain the desired result: $\; \mathfrak{m}_{\alpha}(\rho_*^\alpha)\to x_* $ when $ \alpha \to \infty.$ 
\end{proof}

\subsection{Multiple--minimizer} \label{subsec: multi min}

The strategy developed in the previous subsection can be further generalized to a set of minimizers that is neither a unique singleton, nor a collection of discrete points. More specifically, we shall drop the assumption on $f(\cdot)$ having a unique minimizer, and we consider the following.
\begin{assum}\label{assum4}
The function $f(\cdot)$ attains its minimal value on a set $\mc{M} = \cup_{i=1}^{m} \, \mc{O}_i$,  with $1\leq m<\infty $ and where every $\mc{O}_i$ is a compact and connected subset of $\RR^d$.
\end{assum}

On each $\mc{O}_i$, the function $f(\cdot)$ is constant equal to its minimal value. When $\mc{O}_i$ is not a singleton, the graph (landscape) of $f(\cdot)$ is flat, which is why these are sometimes called \textit{flat minimizers}. It is also worth mentioning that a particular example of Assumption \ref{assum4} is when $f(\cdot)$ has $m$ global minimizers $\{x_*^1,\dots,x_*^m\}=: \mc{M}$. In this case, $\mathcal{O}_{i} = \{x_{*}^{i}\}$.

\begin{remark}\label{rmk:flat isolated}
The function $f(\cdot)$ being continuous and coercive thanks to Assumption \ref{assum1}, for all arbitrary $\varepsilon>0$ it holds that
    \begin{equation}\label{eq:flat min isolated}
        \inf\{ \, f(x): x \in \mc{M}^c_\varepsilon \,\} >\underline f\,
    \end{equation}
    where 
    \begin{equation*}
        \mc{M}_\varepsilon := \cup_{i=1}^{m} \, \{x\in \RR^d : \mathrm{dist}(x,\mc{O}_i)  \leq \varepsilon\}
    \end{equation*}
    and $\mc{M}^c_\varepsilon = \RR^d \setminus \mc{M}_\varepsilon$ its complement.  Here, $\mathrm{dist}(x,\mc{O})$ is the Euclidean distance between a point $x\in \RR^d$ and a compact set $\mc{O}$, and which is equal to $0$ when $x\in \mc{O}$. 
\end{remark}

\begin{remark}\label{rmk:flat}
Property \eqref{eq:flat min isolated}, which in the case of a single minimizer is \eqref{eq:min isolated}, has already been discussed in \cite[equation (H)]{bardi2023eikonal}; see \cite[Remark 2]{bardi2023eikonal} for a counter-example, and a variation in \cite[Assumption (2.3)]{bardi2024long}. A qualitative interpretation can be found in \cite[Assumption (A3)]{huang2025nondyn} and it is also used in \cite[(A3)]{huang2025nonconv} where several examples of optimization problems satisfying this condition are provided. All of the works cited in this remark invoke this property in their respective treatments of global optimization, thereby emphasizing its natural place as a standard assumption.
\end{remark}

The first result we need in the sequel is the following tightness of the weighted probability measures. Recall that existence and uniqueness of the invariant measure $\rho_*^\alpha$ were the object of Theorem \ref{thm: existence} and Theorem \ref{thm:long time}  (see also Remark \ref{rmk: configuration 3}).

\begin{lemma}\label{lem conv}
    Let $\rho_*^\alpha$ be the unique invariant probability measure of the rescaled CBO \eqref{CBO kappa}, and let $\eta_*^\alpha$ be defined as in \eqref{eq:eta}.
    Then $\{\eta_*^\alpha \}_{\alpha\geq 1}$ is tight.
\end{lemma}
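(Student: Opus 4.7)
The plan is to prove tightness via a uniform second--moment bound on $\eta_*^\alpha$ and then invoke Markov's inequality to obtain a uniform tail estimate. Recall that tightness in $\mathscr{P}(\RR^d)$ requires, for every $\varepsilon > 0$, the existence of a compact set $K_\varepsilon \subset \RR^d$ such that $\eta_*^\alpha(K_\varepsilon^c) \leq \varepsilon$ for all $\alpha \geq 1$. Since $\mathbb{R}^d$ endowed with the Euclidean topology has closed balls as compact sets, it suffices to control the tails $\eta_*^\alpha(\{|x| \geq R\})$ uniformly in $\alpha$.

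The key ingredient is already available in the estimate \eqref{bound second moment eta}, namely
\begin{equation*}
    \eta_*^\alpha(|\cdot|^2) \; \leq \; b_1 + b_2 \, \rho_*^\alpha(|\cdot|^2) \; \leq \; b_1 + b_2\bigl(\rho_0(|\cdot|^2) + C_2\bigr),
\end{equation*}
which is a direct consequence of Lemma \ref{lem: useful estimates} applied to $\rho_*^\alpha$, combined with the uniform--in--$\alpha$ moment bound \eqref{eq:sec}. A crucial point is that, by the remark following Lemma \ref{lem: useful estimates}, the constant $b_2$ can be chosen independently of $\alpha$ for $\alpha \geq 1$; consequently the right--hand side above is a constant, say $\tilde{C}$, which depends only on $\rho_0(|\cdot|^2)$, $C_2$, $M$, $c_u$, and $c_\ell$, but \emph{not} on $\alpha$.

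Equipped with this uniform bound, Markov's (or Chebyshev's) inequality yields, for every $R > 0$,
\begin{equation*}
    \eta_*^\alpha\bigl(\{x\in \RR^d : |x| \geq R\}\bigr) \; \leq \; \frac{\eta_*^\alpha(|\cdot|^2)}{R^2} \; \leq \; \frac{\tilde{C}}{R^2}, \qquad \forall\, \alpha \geq 1.
\end{equation*}
Given any $\varepsilon > 0$, it then suffices to choose $R_\varepsilon := \sqrt{\tilde{C}/\varepsilon}$, and set $K_\varepsilon := \overline{B_{R_\varepsilon}(0)}$, which is a compact subset of $\RR^d$ independent of $\alpha$. This yields $\eta_*^\alpha(K_\varepsilon^c) \leq \varepsilon$ uniformly in $\alpha \geq 1$, thereby establishing tightness of $\{\eta_*^\alpha\}_{\alpha \geq 1}$.

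No real obstacle is anticipated since all the required ingredients — the uniform second--moment bound on $\rho_*^\alpha$ from Theorem \ref{thm: finite moments} and its stationary counterpart \eqref{eq:sec}, the pointwise bound on $\eta_*^\alpha(|\cdot|^2)$ from Lemma \ref{lem: useful estimates}, and the $\alpha$--independence of $b_2$ — have already been established. The only minor point worth emphasizing in the write--up is the $\alpha$--independence of all constants involved, which is exactly what makes the tail bound uniform and thus upgrades a one--shot moment estimate into tightness of the whole family.
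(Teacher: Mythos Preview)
Your proposal is correct and follows essentially the same argument as the paper: bound $\eta_*^\alpha(|\cdot|^2)$ uniformly in $\alpha$ via Lemma~\ref{lem: useful estimates} and \eqref{eq:sec}, then apply Markov's inequality to obtain uniform tail control. The only cosmetic difference is that you cite the bound as \eqref{bound second moment eta}, which in the paper appears later; in a self-contained write-up you should derive it directly from \eqref{lemeq2} and \eqref{eq:sec} at this point.
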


\begin{proof}
    It follows from \eqref{lemeq2} and \eqref{eq:sec} 
  \begin{equation*}
      \eta_*^\alpha(|\cdot|^2)\leq b_1 + b_2 \,\rho_*^\alpha(|\cdot|^2)\leq b_1 + b_2 \big(\rho_0(|\cdot|^2)+C_2\big) =: C_*\,.
  \end{equation*}
  Therefore, for any $\varepsilon$, there exists a compact set $K_\varepsilon:=\{x \in \RR^d \,:\,|x|^2\leq \frac{C_*}{\varepsilon}\}$ such that, by Markov’s inequality, it holds
  \begin{equation*}
      \eta_*^\alpha(K_\varepsilon^c)=\eta_*^\alpha\left(\left\{x \in \RR^d \,:\, |x|^2>\frac{C_*}{\varepsilon} \right\} \right)\leq \frac{\varepsilon \, \eta_*^\alpha(|\cdot|^2)}{C_*}\leq \varepsilon.
  \end{equation*}
\end{proof}

Thanks to the latter Lemma, one applies Prokhorov's theorem and concludes that there exist a subsequence $\{\eta_*^{\alpha_k} \}_{k\geq 1}$ and a probability measure $\eta_*\in \mathscr{P}(\RR^d)$ such that
$\eta_*^{\alpha_k}$ converges weakly to $\eta_*$ as $k \to \infty$.  In the following we still use the notation $\eta_*^\alpha$ to denote the subsequence. 

The next theorem can be viewed as  Laplace's principle in the case of multiple non-trivial minimizers.

\begin{theorem}\label{thm:conver3}
Let $f(\cdot)$ satisfy Assumptions \ref{assum1} and \ref{assum4}, and let $\rho_*^\alpha$ be the unique invariant probability measure of the rescaled CBO \eqref{CBO kappa}. Let $\eta_*^\alpha$ be defined as in \eqref{eq:eta}. 
Then for all $\varepsilon>0$, we have
\begin{equation*}
    \eta_*^\alpha (\mc{M}_{\varepsilon} )
   \; \to 1 \; \mbox{ when } \;\alpha \to \infty.
\end{equation*}
Moreover, when $\alpha \to \infty$, any weak limit $\eta_*$ of the sequence $\{\eta_*^\alpha \}_{\alpha\geq 1}$ concentrates on $\mc{M}$.
\end{theorem}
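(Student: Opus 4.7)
The plan is to adapt the single--minimizer argument from Theorem \ref{thm:conver} to the multiple--minimizer setting, and then transfer the conclusion from $\eta_*^\alpha$ to its weak limit $\eta_*$ via the Portmanteau theorem. The key observation is that, since $\mathcal{M}_\varepsilon$ is closed for every $\varepsilon>0$, its complement $\mathcal{M}_\varepsilon^c$ is open, and weak convergence gives $\eta_*(\mathcal{M}_\varepsilon^c) \leq \liminf_{\alpha\to\infty}\eta_*^\alpha(\mathcal{M}_\varepsilon^c)$. Thus it suffices to prove $\eta_*^\alpha(\mathcal{M}_\varepsilon^c) \to 0$ as $\alpha\to\infty$ for every fixed $\varepsilon>0$, since then, letting $\varepsilon \to 0$ and using $\mathcal{M}^c = \bigcup_{\varepsilon>0}\mathcal{M}_\varepsilon^c$ together with monotone continuity of $\eta_*$, one concludes $\eta_*(\mathcal{M}^c)=0$, i.e. $\eta_*$ is supported on $\mathcal{M}$.

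Fix $\varepsilon>0$ and set $\underline f_\varepsilon := \inf\{f(x) \,:\, x \in \mathcal{M}_\varepsilon^c\}$. Thanks to Remark \ref{rmk:flat isolated} we have $\underline f_\varepsilon > \underline f$. Next, pick any reference point $x_\circ \in \mathcal{O}_1 \subset \mathcal{M}$, so that $f(x_\circ) = \underline f$. By continuity of $f(\cdot)$, there exists $\varepsilon'>0$ such that
\begin{equation*}
\overline f_{\varepsilon'} \,:=\, \sup\{f(x) \,:\, x \in B_{\varepsilon'}(x_\circ)\} \,\leq\, \tfrac{1}{2}(\underline f_\varepsilon + \underline f),
\end{equation*}
which in particular implies $\underline f_\varepsilon - \overline f_{\varepsilon'} \geq \overline f_{\varepsilon'} - \underline f > 0$. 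This is exactly the analogue of \eqref{diff f eps} used in the proof of Theorem \ref{thm:conver}, with the single minimizer $x_*$ replaced by the auxiliary point $x_\circ$ drawn from $\mathcal{M}$.

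From here the argument proceeds exactly as in the proof of Theorem \ref{thm:conver}. Writing
\begin{equation*}
\eta_*^\alpha(\mathcal{M}_\varepsilon^c) \;\leq\; \frac{\int_{\mathcal{M}_\varepsilon^c} e^{-\alpha(f(x)-\overline f_{\varepsilon'})}\,\rho_*^\alpha(\dd x)}{\int_{B_{\varepsilon'}(x_\circ)} e^{-\alpha(f(x)-\overline f_{\varepsilon'})}\,\rho_*^\alpha(\dd x)},
\end{equation*}
the numerator is bounded above by $e^{-\alpha(\underline f_\varepsilon - \overline f_{\varepsilon'})}$ using that $\rho_*^\alpha$ is a probability measure, while the denominator is bounded below by $\rho_*^\alpha(B_{\varepsilon'}(x_\circ)) \geq C_{\varepsilon'} >0$ thanks to Lemma \ref{lem:mass}, the constant $C_{\varepsilon'}$ being independent of $\alpha$. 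Consequently,
\begin{equation*}
\eta_*^\alpha(\mathcal{M}_\varepsilon^c) \;\leq\; \frac{1}{C_{\varepsilon'}} \, e^{-\alpha(\underline f_\varepsilon - \overline f_{\varepsilon'})} \;\longrightarrow\; 0 \quad \text{as } \alpha \to \infty,
\end{equation*}
and the conclusion follows by the Portmanteau argument described above.

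The main obstacle is really already resolved by the earlier machinery: the uniform lower bound on $\rho_*^\alpha(B_{\varepsilon'}(x_\circ))$ furnished by Harnack's inequality (Lemma \ref{lem:mass}) is what allows the test point $x_\circ$ to be chosen freely in $\mathcal{M}$ without losing quantitative control as $\alpha\to\infty$. The only genuinely new ingredient compared to the single--minimizer case is the topological remark that $\mathcal{M}_\varepsilon$ is closed (so $\mathcal{M}_\varepsilon^c$ is open and Portmanteau applies in the correct direction), and the decreasing intersection $\mathcal{M} = \bigcap_{\varepsilon>0}\mathcal{M}_\varepsilon$ which uses compactness of each $\mathcal{O}_i$ to transfer $\eta_*(\mathcal{M}_\varepsilon^c)=0$ for all $\varepsilon$ into $\eta_*(\mathcal{M}^c)=0$.
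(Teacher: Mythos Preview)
Your proof is correct and follows essentially the same approach as the paper's own argument: both establish $\eta_*^\alpha(\mc{M}_\varepsilon^c)\to 0$ by the numerator/denominator estimate (using Remark~\ref{rmk:flat isolated} for the numerator and Lemma~\ref{lem:mass} for the denominator), and then conclude via Portmanteau since $\mc{M}_\varepsilon^c$ is open. The only cosmetic difference is that you lower--bound the denominator using a single ball $B_{\varepsilon'}(x_\circ)$ with $x_\circ\in\mc{M}$, whereas the paper uses the full neighborhood $\mc{M}_{\varepsilon'}$; since Lemma~\ref{lem:mass} is stated for balls, your version is in fact the more direct application.
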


\begin{proof} 
For any $\varepsilon>0$, let us define
\begin{equation*}
    \overline f_\varepsilon:=\sup\{f(x): x \in \mc{M}_\varepsilon\} \mbox{ and } \underline f_\varepsilon:=\inf\{f(x): 
    x \in \mc{M}_\varepsilon^c \} 
\end{equation*}
where $\mc{M}_\varepsilon$ is as defined in Remark \ref{rmk:flat isolated}. 
Since $f$ is continuous, we know that for any $\varepsilon>0$, there exists some $\varepsilon'>0$ such that
\begin{equation}\label{diff f eps 3}
    \overline f_{\varepsilon'} \leq \frac{1}{2} (\underline f_\varepsilon + \underline f), \mbox{ that is }\quad
    2(\overline f_{\varepsilon'}-\underline f) \leq (\underline f_\varepsilon-\underline f)\,.
\end{equation}

Then we compute
\begin{align*}
    \eta_*^\alpha (\mc{M}_\varepsilon^c )
    & =
   \frac{\int_{\mc{M}_\varepsilon^c}e^{-\alpha f(x)} \, \rho_*^\alpha(\dd x)}{\int_{\RR^d}e^{-\alpha f(x)} \, \rho_*^\alpha(\dd x)}\; \leq \frac{\int_{\mc{M}_\varepsilon^c} e^{-\alpha (f(x)-\overline f_{\varepsilon'})} \, \rho_*^\alpha(\dd x)}{\int_{\mc{M}_{\varepsilon'}} e^{-\alpha (f(x)-\overline f_{\varepsilon'})} \, \rho_*^\alpha(\dd x)}.
\end{align*} 

\textbullet\, In the set $\mc{M}_\varepsilon^c $, we have $f(x) \geq \underline f_\varepsilon$, hence $e^{-\alpha (f(x)-\overline f_{\varepsilon'})} \leq e^{-\alpha (\underline f_\varepsilon -\overline f_{\varepsilon'})}$. Moreover using \eqref{diff f eps 3}, it holds
\begin{equation*}
    \underline f_\varepsilon -\overline f_{\varepsilon'} = (\underline f_\varepsilon - \underline f) - (\overline f_{\varepsilon'} - \underline f) \geq (\overline f_{\varepsilon'} - \underline f)>0.
\end{equation*}
Therefore, we have 
\begin{align*}
    e^{-\alpha (f(x)-\overline f_{\varepsilon'})} \leq e^{-\alpha (\underline f_\varepsilon -\overline f_{\varepsilon'})} \leq e^{-\alpha (\overline f_{\varepsilon'} - \underline f)} 
\end{align*}
and $\rho_*^\alpha$ being a probability measure, we have 
\begin{align*}
    \int_{\mc{M}_\varepsilon^c}e^{-\alpha (f(x)-\overline f_{\varepsilon'})} \, \rho_*^\alpha(\dd x)  \leq e^{-\alpha (\overline f_{\varepsilon'} - \underline f)} \; \to 0 \; \mbox{ when } \;\alpha \to \infty.
\end{align*}

\textbullet\, In the set $\mc{M}_{\varepsilon'}$, we have $f(x) \leq \overline f_{\varepsilon'}$, hence $e^{-\alpha (f(x)-\overline f_{\varepsilon'})} \geq 1$, and
\begin{align*}
    \int_{\mc{M}_{\varepsilon'}}e^{-\alpha (f(x)-\overline f_{\varepsilon'})} \, \rho_*^\alpha(\dd x)  \geq \rho_*^\alpha(\mc{M}_{\varepsilon'}) \geq  C_{\varepsilon'}>0
\end{align*}
where in the last inequality $C_{\varepsilon'}$ comes from Lemma \ref{lem:mass}, which is independent of $\alpha$.

Finally, one gets
\begin{equation}\label{lim thm}
    \eta_*^\alpha (\mc{M}_\varepsilon^c )
   \leq \frac{e^{-\alpha (\overline f_{\varepsilon'} - \underline f)}}{C_{\varepsilon'}}\,\; \to 0 \; \mbox{ when } \;\alpha \to \infty.
\end{equation}
It follows from the fact that $\eta_*^{\alpha}$ converges weakly to $\eta_*$  as $\alpha \to \infty$ (Lemma \ref{lem conv}), that
\begin{equation*}
 \eta_*(A)\leq \liminf_{\alpha \to \infty }\eta_*^\alpha(A)
\end{equation*}
for any open set $A\subset \RR^d$. It is obvious that 
$\mc{M}_\varepsilon^c $ is an open set. Thus we have $\eta_*(\mc{M}_\varepsilon^c ) \leq 0\,$, which implies that $\eta_*(\mc{M}_\varepsilon) =1\,$. 
Since $\varepsilon$ can be made arbitrarily small, we have indeed proved that $\eta_*$ concentrates on $\mc M$, that is $\eta_*(\mc M) = 1$.
\end{proof}

As a consequence of the previous theorem, we obtain the following convergence when the objective function $f(\cdot)$ has multiple non-trivial minimizers. It indicates that when $f(\cdot)$ admits multiple global minimizers, the consensus point $\mathfrak{m}_{\alpha}(\rho^{\alpha}_{t})$ (or the rescaled mean $\kappa^{-1}\,\EE\left[\,\OX_t\,\right]$) of the CBO dynamics converges to a point in $\mathcal{M}$, that is to one of the global minimizers.

\begin{corollary}\label{cor: conv multi}
In the situation of Theorem \ref{thm:conver3}, the following hold
\begin{equation*}
\begin{aligned}
    & \lim\limits_{\alpha\to \infty}\,\lim\limits_{t\to \infty}  \operatorname{dist}\!\big(\mathfrak{m}_{\alpha}(\rho^{\alpha}_{t}), \,\mathcal{M}\big) 
    =\lim\limits_{\alpha\to \infty} \operatorname{dist}\!\big(\mathfrak{m}_{\alpha}(\rho^{\alpha}_{*}), \,\mathcal{M}\big) 
    = 0,\\
    \text{and} \quad  & \lim\limits_{\alpha\to \infty} \, \lim\limits_{t\to\infty} \operatorname{dist}\!\left( \kappa^{-1}\,\EE\left[\,\OX_t\,\right],\,\mathcal{M}\right) = 0. 
\end{aligned}
\end{equation*}
\end{corollary}

\begin{proof}
Let $\OX_.$ be the solution to the rescaled CBO \eqref{CBO kappa}, and let $\rho_*^\alpha$ be its unique invariant probability measure whose existence and uniqueness are given by Theorem \ref{thm: existence} and Theorem \ref{thm:long time} (see also Remark \ref{rmk: configuration 3} for a discussion on the parameters). Besides Assumption \ref{assum1}, we are assuming $f(\cdot)$ to satisfy Assumption \ref{assum4} as well, and recall $\mathcal{M}$ being the set of its minimizers. 
We proceed in several steps.

\textit{Step 1. (We show $\lim\limits_{\alpha\to \infty} \operatorname{dist}(\mathfrak{m}_{\alpha}(\rho^{\alpha}_{*}), \,\mathcal{M}) = 0$.)}\\
Recall 
\begin{equation*}
    \eta_*^\alpha(A)
    = \frac{\int_{A}e^{-\alpha f(x)}\rho_*^\alpha(\dd x)}{\int_{\RR^d}e^{-\alpha f(y)}\rho_*^\alpha(\dd y)},\quad A\in \mc{B}(\RR^d) 
    \quad \text{ and } \quad
    \mathfrak{m}_{\alpha}(\rho^{\alpha}_{*}) = \int_{\mathbb{R}^{d}}\,x\; \eta_*^\alpha(\dd x).
\end{equation*}
We have
\begin{equation}\label{eq:dist proof M}
\begin{aligned}
    \operatorname{dist}(\mathfrak{m}_{\alpha}(\rho^{\alpha}_{*}), \,\mathcal{M}) 
    & = \inf\limits_{z\in\mathcal{M}}|\mathfrak{m}_{\alpha}(\rho^{\alpha}_{*}) - z| \\
    & = \inf\limits_{z\in\mathcal{M}}\left|\int_{\mathbb{R^{d}}}x\; \eta_*^\alpha(\dd x)  - z\right| 
    \leq \inf\limits_{z\in\mathcal{M}} \int_{\mathbb{R}^{d}} |x-z|\; \eta_*^\alpha(\dd x).
\end{aligned}
\end{equation}
Now we prove that 
\begin{equation}\label{exchange ineq}
    \inf\limits_{z\in\mathcal{M}} \int_{\mathbb{R}^{d}} |x-z|\; \eta_*^\alpha(\dd x) \leq \int_{\mathbb{R}^{d}} \operatorname{dist}(x,\mathcal{M})\, \eta_*^\alpha(\dd x).
\end{equation}
Let $\varepsilon>0$, and let $z_{\varepsilon}$ be a quasi-minimizer for $\min\limits_{z\in\mathcal{M}} |x-z| = \operatorname{dist}(x,\mathcal{M})$, that is
\begin{equation*}
    |x-z_{\varepsilon}| \leq \varepsilon + \operatorname{dist}(x,\mathcal{M}).
\end{equation*}
Then one gets
\begin{equation*}
\begin{aligned}
    \int_{\mathbb{R}^{d}} |x-z_{\varepsilon}| \, \eta_*^\alpha(\dd x)\leq \varepsilon + \int_{\mathbb{R}^{d}}\operatorname{dist}(x,\mathcal{M})\,\eta_*^\alpha(\dd x).
\end{aligned}
\end{equation*}
Moreover, the following always holds
\begin{equation*}
    \inf\limits_{z\in \mathcal{M}} \int_{\mathbb{R}^{d}} |x-z| \, \eta_*^\alpha(\dd x) 
    \leq \int_{\mathbb{R}^{d}} |x-z_{\varepsilon}| \, \eta_*^\alpha(\dd x).
\end{equation*}
Combining the latter two inequalities, one obtains
\begin{equation*}
    \inf\limits_{z\in \mathcal{M}} \int_{\mathbb{R}^{d}} |x-z| \, \eta_*^\alpha(\dd x) 
    \leq 
    \varepsilon + \int_{\mathbb{R}^{d}}\operatorname{dist}(x,\mathcal{M})\,\eta_*^\alpha(\dd x).
\end{equation*}
These quantities being independent from $\varepsilon$, we can send $\varepsilon\to 0$ and get the desired result \eqref{exchange ineq}. 

Next, we continue with \eqref{eq:dist proof M} which together with \eqref{exchange ineq} yield
\begin{equation*}
\begin{aligned}
    \operatorname{dist}(\mathfrak{m}_{\alpha}(\rho^{\alpha}_{*}), \,\mathcal{M})
    & \leq \int_{\mathbb{R}^{d}}\operatorname{dist}(x,\mathcal{M})\,\eta_*^\alpha(\dd x)\\
    & = \int_{\mathcal{M}_{\varepsilon}}     \operatorname{dist}(x,\mathcal{M})\,\eta_*^\alpha(\dd x) + \int_{\mathcal{M}_{\varepsilon}^{c}} \operatorname{dist}(x,\mathcal{M})\,\eta_*^\alpha(\dd x)
\end{aligned}
\end{equation*}
where $\varepsilon>0$ is arbitrarily fixed, and $\mc{M}_\varepsilon$ is as defined in Remark \ref{rmk:flat isolated}. 

The integrand in the first integral is upper-bounded by $\varepsilon$ since $x\in \mc{M}_{\varepsilon}$, which means $\operatorname{dist}(x,\mathcal{M}) \leq \varepsilon$. Thus we have $\int_{\mc{M}_{\varepsilon}} \operatorname{dist}(x,\mathcal{M})\, \eta_*^\alpha(\dd x) \leq \varepsilon$. 

For the second integral, we shall proceed as in the proof of the second statement in Corollary \ref{cor: conv to min}.  
Using Cauchy-Schwarz inequality, we have
\begin{equation*}
\begin{aligned}
    \int_{\mc{M}_{\varepsilon}^{c}} \operatorname{dist}(x,\mathcal{M})\, \eta_*^\alpha(\dd x)
    & \leq \bigg(\eta_*^\alpha(\mc{M}_{\varepsilon}^{c})\bigg)^{\frac{1}{2}} \bigg(\int_{\mc{M}_{\varepsilon}^{c}} \operatorname{dist}(x,\mathcal{M})^{2}\, \eta_*^\alpha(\dd x)\bigg)^{\frac{1}{2}}\\
    & \leq \bigg(\eta_*^\alpha(\mc{M}_{\varepsilon}^{c})\bigg)^{\frac{1}{2}} \bigg(2\,\int_{\mc{M}_{\varepsilon}^{c}} |x|^{2} + |z|^{2}\, \eta_*^\alpha(\dd x)\bigg)^{\frac{1}{2}}\quad \text{ for some } z\in\mathcal{M} \\
    & \leq \bigg(\eta_*^\alpha(\mc{M}_{\varepsilon}^{c})\bigg)^{\frac{1}{2}} \bigg(2\,|z|^{2} + 2\,\eta_*^\alpha(|\cdot|^{2})\bigg)^{\frac{1}{2}}.
\end{aligned}
\end{equation*}
Therefore, we obtain
\begin{align*}
    \operatorname{dist}(\mathfrak{m}_{\alpha}(\rho^{\alpha}_{*}), \,\mathcal{M}) \leq \varepsilon + \bigg(\eta_*^\alpha(\mc{M}_{\varepsilon}^{c})\bigg)^{\frac{1}{2}} \bigg(2\,|z|^{2} + 2\,\eta_*^\alpha(|\cdot|^{2})\bigg)^{\frac{1}{2}}.
\end{align*}
Recalling \eqref{lemeq2} and \eqref{eq:sec}, the second moment $\eta_*^\alpha(|\cdot|^{2})$ is bounded as in \eqref{bound second moment eta}.  Finally, using Theorem \ref{thm:conver3} (see in particular \eqref{lim thm}), we know that $\eta_*^\alpha(\mc{M}_{\varepsilon}^{c}) \,\to 0\,$ as $\alpha \to \infty$. Here $\varepsilon$ being arbitrarily chosen, this yields $\lim\limits_{\alpha\to \infty}\operatorname{dist}(\mathfrak{m}_{\alpha}(\rho^{\alpha}_{*}), \,\mathcal{M}) =0$.

\textit{Step 2. (We show $\lim\limits_{\alpha\to \infty}\,\lim\limits_{t\to \infty}  \operatorname{dist}(\mathfrak{m}_{\alpha}(\rho^{\alpha}_{t}), \,\mathcal{M}) =0 $.)}\\
We shall use Corollary \ref{cor: long time of m}, together with the result in \textit{Step 1}. Indeed, using the triangular inequality, one has
\begin{equation*}
    \operatorname{dist}(\mathfrak{m}_{\alpha}(\rho^{\alpha}_{t}), \,\mathcal{M}) \leq \operatorname{dist}(\mathfrak{m}_{\alpha}(\rho^{\alpha}_{*}), \,\mathcal{M}) + |\mathfrak{m}_{\alpha}(\rho^{\alpha}_{t}) - \mathfrak{m}_{\alpha}(\rho^{\alpha}_{*})|.
\end{equation*}
It suffices now to take the limit first as $t\to \infty$ using Corollary \ref{cor: long time of m}, then send $\alpha\to \infty$ using the conclusion of \textit{Step 1} above.

\textit{Step 3. (We show $\lim\limits_{\alpha\to \infty} \, \lim\limits_{t\to\infty} \operatorname{dist}\left( \kappa^{-1}\,\EE\left[\,\OX_t\,\right],\,\mathcal{M}\right) = 0$.)}\\
Using the triangular inequality, one has
\begin{equation*}
    \operatorname{dist}\left( \kappa^{-1}\,\EE\left[\,\OX_t\,\right],\,\mathcal{M}\right) \leq \operatorname{dist}(\mathfrak{m}_{\alpha}(\rho^{\alpha}_{*}), \,\mathcal{M}) + |\kappa^{-1}\,\EE\left[\,\OX_t\,\right] - \mathfrak{m}_{\alpha}(\rho^{\alpha}_{*})|.
\end{equation*}
The conclusion thus follows using the first statement of Corollary \ref{cor: conv to min} which holds under Assumption \ref{assum1}, together with \textit{Step 1} above. 
\end{proof}

\begin{remark}
A result more general than \eqref{exchange ineq} can be found in \cite[Proposition 3.1]{kouhkouh2024viscous}.
\end{remark}

To validate Corollary~\ref{cor: conv multi}, we apply our CBO dynamics to the two-dimensional objective function
\[
f(x,y):= \big((x-1)^2 + (y-1)^2\big)\big((x+1)^2 + (y+1)^2\big),
\]
which possesses two global minimizers at $(1,1)$ and $(-1,-1)$. In Figure~\ref{fig:multi}, we plot the trajectory of the consensus points $\mathfrak{m}_{\alpha}(\rho^{N}_{t})$ generated by the CBO algorithm. The results show that, depending on the initial particle distribution, the method converges to one of the two global minimizers, thereby confirming Corollary~\ref{cor: conv multi}. The simulation parameters are chosen as $N = 1000$, $T = 4$, $\mathrm{d}t = 0.01$, $\lambda = 1$, $\sigma = 0.3$, $\kappa = 0.9$, and $\delta = 0.8$.

\begin{figure}[H]
\centering
\begin{minipage}{0.49\textwidth}
\centering
\includegraphics[width=\linewidth,height=6cm]{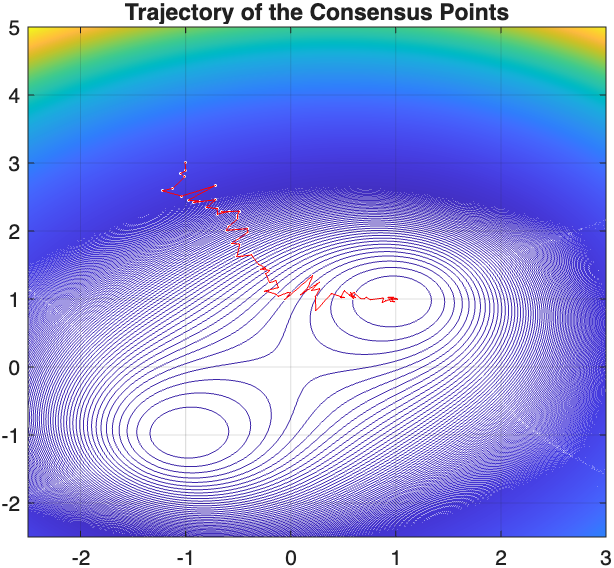}
\end{minipage}
\hfill
\begin{minipage}{0.49\textwidth}
\centering
\includegraphics[width=\linewidth,height=6cm]{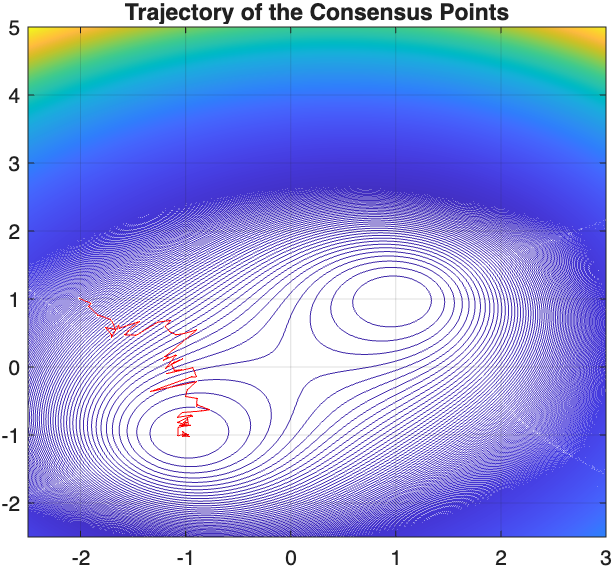}
\end{minipage}
\caption{%
Numerical test on the objective function $f(x,y)$, which has two global minimizers at $(1,1)$ and $(-1,-1)$.\\ 
\textbf{Left:} The particles are initialized uniformly in $[-2,-1]\times [3,4]$, a region distant from the global minimizers. The CBO method successfully converges to the global minimizer $(1,1)$. \\
\textbf{Right:} The particles are initialized uniformly in $[-3,-2]\times [1,2]$. The CBO method successfully converges to the global minimizer $(-1,-1)$.
}
\label{fig:multi}
\end{figure}

\section{Numerical aspects}\label{sec: num}

In this section, we investigate numerical aspects of the rescaled CBO, with particular emphasis on the sensitivity of the algorithm to the parameters $\kappa$ and $\delta$. 

Our results indicate that the rescaled CBO offers significant advantages over the standard CBO, especially when the initial particle distribution does not contain the global minimizer. 

To this end, we first apply the rescaled CBO dynamics using the standard Euler–Maruyama scheme to the two-dimensional Rastrigin function $R(\cdot,\cdot)$, which features a unique global minimizer at $x_* = (1, 1)^{\top}$ 
\begin{equation*}
    R(x_1,x_2) = 20 + \left(x_1^2 - 10\cos(2\pi x_1)\right) + \left(x_2^2 - 10\cos(2\pi x_2)\right).
\end{equation*}
The particles are initially uniformly distributed in the square $[2, 3] \times [2, 3]$, situated away from $x_*$ to test the algorithm's global search capabilities. We fix the parameters at $\lambda = 1, \sigma = 0.5, \alpha = 10^{15}, T = 100,$ and $\Delta t = 0.01$ to evaluate the sensitivity of the success rate across various choices of $\kappa$ and $\delta$. In Table \ref{table:hh1}, the number of particles is $N=100$. In Table~\ref{table:hh2}, it is $N=500$.    A test is deemed successful if  $|\mathfrak{m}_{\alpha}(\rho_T^N)-x_*|<0.05$; otherwise, it fails. The success rate is calculated based on $100$ test runs.

\begin{table}[H]
    \centering
    \footnotesize 
    
    \begin{minipage}{0.48\textwidth}
        \centering
        \resizebox{\textwidth}{!}{
        \begin{tabular}{lllllllllll}
        \hline
        \diagbox{$\delta$}{Rate}{$\kappa$} 
               & 0.1   & 0.2  & 0.3  & 0.4  & 0.5  & 0.6  & 0.7  & 0.8  & 0.9  & 1.0  \\ \hline
             0 & 0     & 0    & 0    & 0    & 0    & 0    & 0    & 0    & 0    & 0       \\
           0.5 & 0     & 0    & 0    & 0    & 0    & 0    & 0.03 & 0.98 & 1.00 & 1.00  \\
           1.0 & 0.02  & 0.03 & 0.05 & 0.08 & 0.08  & 0.19 & 0.49 & 0.87 & 0.96   & 0.98  \\
           1.5 & 0.05  & 0.06 & 0.13 & 0.16 & 0.31 & 0.38 & 0.52 & 0.72 & 0.84   & 0.79  \\
           2.0 & 0.19  & 0.18 & 0.21 & 0.26 & 0.30 & 0.33 & 0.39 & 0.61 & 0.68   & 0.60  \\
           2.5 & 0.16  & 0.21 & 0.15 & 0.29 & 0.27 & 0.40 & 0.43 & 0.54 & 0.51   & 0.49 \\
           3.0 & 0.14  & 0.17 & 0.17 & 0.15 & 0.25 & 0.22 & 0.37 & 0.44 & 0.42   & 0.42 \\
           3.5 & 0.21  & 0.26 & 0.20 & 0.27 & 0.30 & 0.37 & 0.25 & 0.25 & 0.35   & 0.33  \\
           4.0 & 0.15  & 0.19 & 0.26 & 0.20 & 0.24 & 0.22 & 0.28 & 0.36 & 0.32   & 0.32  \\
           4.5 & 0.16  & 0.15 & 0.18 & 0.16 & 0.31 & 0.24 & 0.27 & 0.23 & 0.34   & 0.22 \\
           5.0 & 0.11  & 0.21 & 0.20 & 0.18 & 0.21 & 0.14 & 0.28 & 0.26 & 0.31   & 0.33 \\ \hline
        \end{tabular}}
        \caption{Success rate for $N=100$.}
        \label{table:hh1}
    \end{minipage}
    \hfill 
    \begin{minipage}{0.48\textwidth}
        \centering
        \resizebox{\textwidth}{!}{%
        \begin{tabular}{lllllllllll}
        \hline
        \diagbox{$\delta$}{Rate}{$\kappa$} 
               & 0.1   & 0.2  & 0.3  & 0.4  & 0.5  & 0.6  & 0.7  & 0.8  & 0.9  & 1.0  \\ \hline
             0 & 0     & 0    & 0    & 0    & 0    & 0    & 0    & 0    & 0    & 0       \\
           0.5 & 0     & 0.01 & 0    & 0.03 & 0.02 & 0.04 & 1.00 & 1.00 & 1.00 & 1.00  \\
           1.0 & 0.08  & 0.13 & 0.15 & 0.31 & 0.61 & 0.87 & 0.97 & 1.00 & 1.00   & 1.00  \\
           1.5 & 0.30  & 0.29 & 0.40 & 0.56 & 0.76 & 0.91 & 0.94 & 0.99 & 0.99   & 1.00  \\
           2.0 & 0.46  & 0.37 & 0.50 & 0.60 & 0.72 & 0.81 & 0.92 & 0.93 & 0.99   & 0.98  \\
           2.5 & 0.40  & 0.37 & 0.50 & 0.61 & 0.66 & 0.82 & 0.84 & 0.90 & 0.94   & 0.91 \\
           3.0 & 0.47  & 0.39 & 0.51 & 0.63 & 0.67 & 0.75 & 0.74 & 0.83 & 0.81   & 0.81 \\
           3.5 & 0.49  & 0.48 & 0.51 & 0.44 & 0.53 & 0.64 & 0.67 & 0.67 & 0.76   & 0.72  \\
           4.0 & 0.40  & 0.43 & 0.49 & 0.59 & 0.48 & 0.62 & 0.65 & 0.57 & 0.72   & 0.71  \\
           4.5 & 0.40  & 0.43 & 0.40 & 0.52 & 0.46 & 0.50 & 0.58 & 0.53 & 0.55   & 0.62 \\
           5.0 & 0.34  & 0.51 & 0.47 & 0.37 & 0.48 & 0.48 & 0.54 & 0.43 & 0.55   & 0.51 \\ \hline
        \end{tabular}}
        \caption{Success rate for $N=500$.}
        \label{table:hh2}
    \end{minipage}
\end{table}

As observed in the table, optimal performance is generally achieved by choosing a large $\kappa$ and a small $\delta$ (the upper-right region of the table). Notably, the standard CBO configuration ($\kappa=1, \delta=0$) fails completely with a success rate of $0$. These results further validate the motivation for introducing the scaling parameter $\kappa \in (0, 1)$ and the non-degenerate diffusion $\delta > 0$ to enhance convergence. 
Furthermore, one can observe in Table~\ref{table:hh2} that increasing the number of particles generally improves the success rate.

We further apply the CBO dynamics to the Ackley function
\[
A(x) := -20 \exp\!\left(-0.2 |x - x_*|\right) - \exp\!\left( \frac{1}{d} \sum_{i=1}^d \cos(2\pi (x_i - x_{*,i})) \right) + e + 20,
\]
in dimension $d = 20$, where the unique global minimizer is $x_* = (1,\dots,1)^\top$.

\begin{figure}[H]
\centering
\begin{minipage}{0.49\textwidth}
\centering
\includegraphics[width=\linewidth]{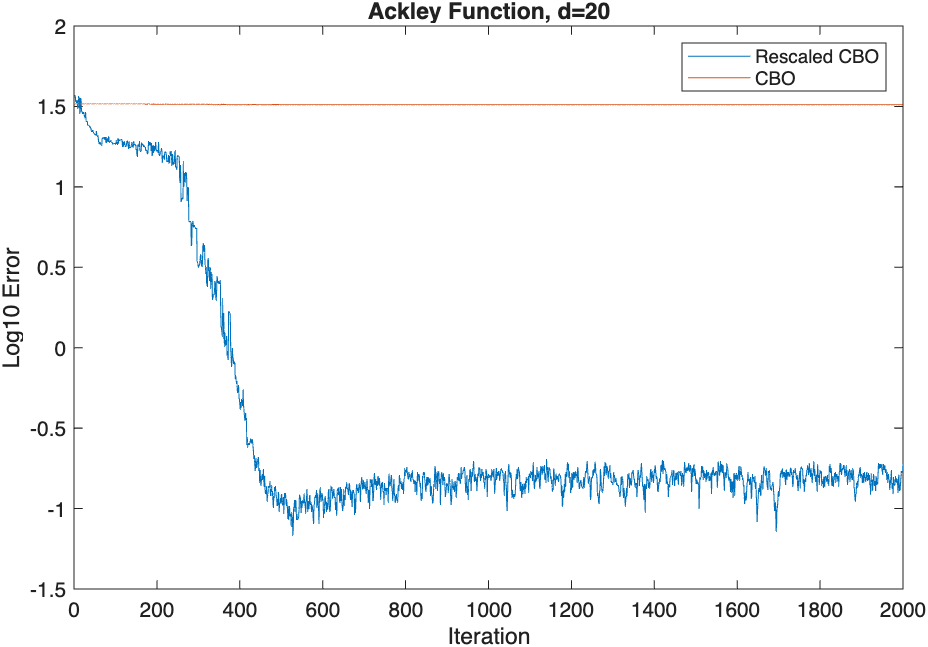}
\end{minipage}
\hfill
\begin{minipage}{0.49\textwidth}
\centering
\includegraphics[width=\linewidth]{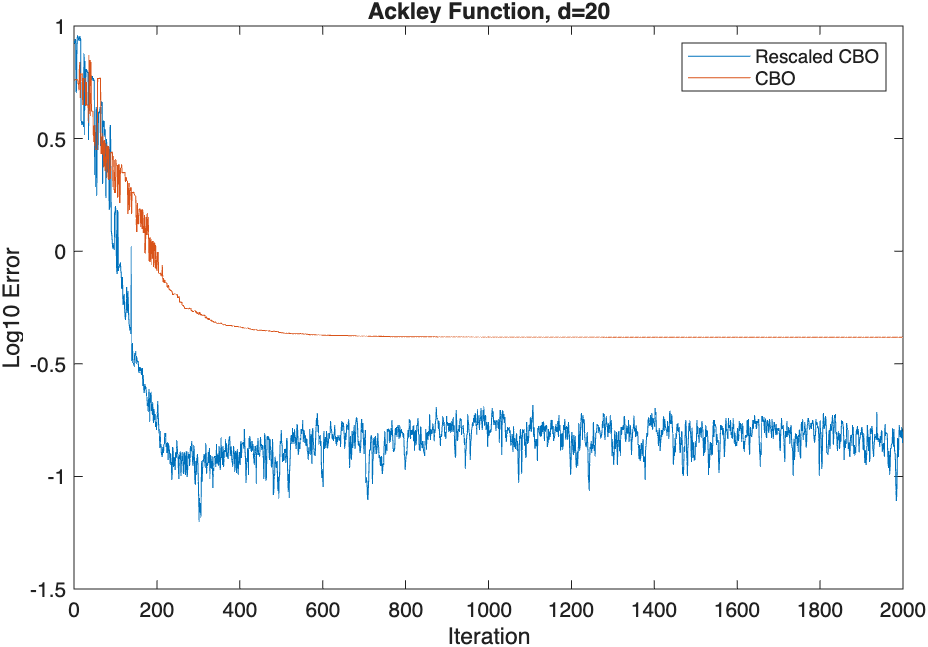}
\end{minipage}
\caption{%
We test the Ackley function in dimension $d = 20$.\\ 
\textbf{Left:} The particles are initialized uniformly in $[2,3]^{20}$ (which \textit{excludes} $x_*$). The \textit{rescaled} CBO ($\kappa = 0.9$, $\delta = 0.8$) converges to the global minimizer, while the \textit{standard} CBO stalls at a local minimum. \\
\textbf{Right:} The particles are drawn from $\mathcal{N}((2,\dots,2)^\top, \mathds{I}_{20})$ (whose support \textit{includes} $x_*$). The \textit{rescaled} CBO (with $\kappa = 0.9$ and $\delta = 0.8$) still outperforms the \textit{standard} CBO. 
}
\label{fig:ackley_comparison}
\end{figure}
In Figure \ref{fig:ackley_comparison}, we fix the parameters $\lambda = 1$, $\sigma = 0.3$, $\alpha = 10^{15}$, $N = 500\,000$, $T = 20$, and $\Delta t = 0.01$.
In Figure~\ref{fig:ackley_comparison}-(Left), the particles are initially sampled uniformly from the hypercube $[2,3]^{20}$, which does not contain $x_*$. The plot shows the evolution of the error $\log(|\mathfrak{m}_{\alpha}(\rho_t^N) - x_*|)$ over time $t$. One observes that the standard CBO becomes trapped in a local minimizer, whereas the rescaled CBO with $\kappa = 0.9$ and $\delta = 1$ successfully converges to the global minimizer. In Figure~\ref{fig:ackley_comparison}-(Right), we consider an initial distribution $\mathcal{N}((2,\dots,2)^\top, \mathds{I}_{20})$, whose support includes $x_*$. Even in this more favorable setting, the rescaled CBO demonstrates better performance, highlighting its robustness and effectiveness beyond merely escaping poor initialization.

Several additional numerical experiments (using dimensions up to 20 and a variety of objective functions) are reported in \cite[Section 5]{herty2025multiscale}. In fact, the optimization problem studied in \cite{herty2025multiscale} is multi-level such as
\begin{equation*}
    \begin{aligned}
        &\min\limits_{x\in\mathbb{R}^n} \, F(x,y)\\
        & \quad\text{s.t.}\quad  y\in \argmin_{y\in\mathbb{R}^m} \, G(x,y),
    \end{aligned}
\end{equation*}
where $F,G:\mathbb{R}^{n}\times \mathbb{R}^{m}\to \mathbb{R}$ for some $n,m\in \mathbb{N}$.
In order to fit it into the present framework, it suffices to take the objective function $F$  to be constant (a simplification) and drop the dependence on its corresponding $x$ variable (by setting it to a fixed constant) in the objective function $G$. The resulting optimization problem becomes
\begin{equation*}
    \left.
    \begin{aligned}
        &\min\limits_{x\in\mathbb{R}^n} \quad 1\\
        & \quad\text{s.t.}\quad  y\in \argmin_{y\in\mathbb{R}^m} \, G(0,y)
    \end{aligned}
    \quad\right \}
    \quad  \Longleftrightarrow \qquad \min\limits_{y\in\mathbb{R}^m} \, \overline{G}(y)
\end{equation*}
where $\overline{G}(y):= G(0,y)$. Hence, the multi-scale dynamics \cite[equation (3.1)]{herty2025multiscale} reduces to $(0, \mathds{Y}_{t})$ where the process $\mathds{Y}$ solves  \eqref{CBO kappa}. 
With this simplification, the model in \cite{herty2025multiscale} encompasses the one considered here, and the remainder of  \cite[Section 5]{herty2025multiscale} is fully compatible with our setting. Consequently, the numerical tests and the results obtained therein apply directly and would be identical in our case. See also \cite[Remark 5.1]{herty2025multiscale}. 

\section{Conclusion}\label{sec: conclusion}

\subsection{Summary of main findings}

We have studied asymptotic properties of the rescaled CBO dynamics  
\begin{equation*}
    \rd \OX_t =-\lambda\left(\OX_t - \kappa\,\mathfrak{m}_{\alpha}(\rho_t^\alpha)\right)\,\dt + \sigma\left(\delta\,\mathds{I}_{d} + D(\OX_t - \kappa\, \mathfrak{m}_{\alpha}(\rho_t^\alpha)) \right)\rd B_t,\quad \rho_t^\alpha =\text{Law}(\OX_t)
\end{equation*}
under suitable conditions on the parameters, for example those in Remark \ref{rmk: configuration 3}.  
We do not make any assumptions on the initial distribution $\OX_{0}\sim\rho_0$ besides satisfying $ \rho_0\in\mathscr{P}_{4}(\RR^d)$. Supposing that $f(\cdot)$ satisfies Assumption \ref{assum1}:
\begin{itemize}
    \item We proved that such a process admits a unique invariant probability measure $\rho^{\alpha}_{*}$, and its law enjoys a $\mathbb{W}_{2}$--exponential contraction \eqref{contraction}.
    \item When $f(\cdot)$ has a unique minimizer $x_*$, we proved that 
    \begin{equation*}
    \begin{aligned}
        & \lim_{\alpha\to \infty}\lim_{t\to \infty}\; \mathfrak{m}_{\alpha}(\rho^{\alpha}_{t}) = \lim_{\alpha\to \infty}\; \mathfrak{m}_{\alpha}(\rho^{\alpha}_{*}) = x_{*},\\
        \text{ and } \; & \lim_{\alpha\to \infty}\lim_{t\to \infty} \; \kappa^{-1} \EE\left[\,\OX_t\,\right]=\lim_{\alpha\to \infty}\; \kappa^{-1} \int_{\RR^d}x \, \dd\rho^{\alpha}_{*} (x) \, = \,  x_{*},\\
    \end{aligned}
    \end{equation*}
    
    \item 
    When $f(\cdot)$ has multiple minimizers as $\mathcal{M}$, we proved that
    \begin{equation*}
    \begin{aligned}
        & \lim\limits_{\alpha\to \infty}\,\lim\limits_{t\to \infty}  \operatorname{dist}(\mathfrak{m}_{\alpha}(\rho^{\alpha}_{t}), \,\mathcal{M}) 
        =\lim\limits_{\alpha\to \infty} \operatorname{dist}(\mathfrak{m}_{\alpha}(\rho^{\alpha}_{*}), \,\mathcal{M}) 
        = 0,\\
        \text{and} \quad  & \lim\limits_{\alpha\to \infty} \, \lim\limits_{t\to\infty} \operatorname{dist}\!\left( \kappa^{-1}\,\EE\left[\,\OX_t\,\right],\,\mathcal{M}\right) = 0. 
    \end{aligned}
    \end{equation*}
\end{itemize}
The absence of any conditions on the initial distribution is precisely what ensures the \textit{global} nature of the convergence, making this result, to the best of our knowledge, the first --and, as of now, the only-- one that guarantees global convergence as both time and inverse temperature $\alpha$ tend to infinity.

\subsection{Future research directions}

The present analysis paves the way for several extensions. In particular, we intend to investigate the following topics in future work:
\begin{itemize}
    \item A faithful convergence for the polarized CBO \cite{bungert2022polarized,fornasier2025pde} to effectively find multiple minimizers (not only one of them as it is proved in the present paper). 
    \item An adaptive convergence where the growth of $\alpha$ is a function of time.
    \item An extension to stochastic optimization as in \cite{bonandin2025consensus,bellavia2025discrete}, where the objective function depends on a random parameter.
    \item A sharper qualitative and quantitative analysis on the values of the parameters $\kappa, \delta, N, \alpha, \sigma, T$. Some insights have been discussed in \cite[Section 5]{herty2025multiscale}. 
\end{itemize}

\section*{Acknowledgments}

The authors wish to thank the three referees for their valuable comments which helped improve the paper.

\appendix

\section{Existence of invariant measure}\label{app: existence}

This section is devoted to verifying Theorem \ref{thm: existence} on the existence of an invariant measure to the distribution-dependent SDE (DDSDE) \eqref{CBO kappa} which we recall is
\begin{equation}\label{CBO appendix}
\rd X_t=-\lambda(X_t-\kappa\, \mathfrak{m}_{\alpha}(\operatorname{Law}(X_t))\dt+\sigma\left(\delta\,\mathds{I}_{d} + D(X_t-\kappa\,\mathfrak{m}_{\alpha}(\operatorname{Law}(X_t))\right) \rd B_t\,.
\end{equation}
We use the following notation for brevity: 
for $(x,\mu)\in \mathbb{R}^{d}\times \mathscr{P}(\mathbb{R}^{d})$
\begin{equation*}
	b(x,\mu):=-\lambda(x-\kappa\,\mathfrak{m}_{\alpha}(\mu)),\quad \text{ and }\quad \bm{\sigma}(x,\mu):=\sigma\delta\,\mathds{I}_{d} + \sigma D(x-\kappa\,\mathfrak{m}_{\alpha}(\mu)),
\end{equation*}
where $\lambda, \sigma,\kappa,\alpha$ are positive constants.

We shall provide  a self-contained verification of the existence of an invariant measure to \eqref{CBO appendix} which we recall was proven in \cite[Proposition 3.4]{huang2024self} (see also Appendix A.2 therein). 
To do so, we shall verify the validity of the assumptions needed for the result of S.-Q. Zhang \cite[Theorem 2.2]{zhang2023existence} which will guarantee existence of an invariant measure in $\mathscr{P}_{2,R}(\mathbb{R}^{d})$ for some sufficiently large fixed $R>0$.

\textbullet\; Assumption \cite[(H1)]{zhang2023existence}\\
We verify the validity of this assumption with $r_{1} = r_{2} = 1$ and $r_{3} = 1+r_{2}= 2$. Then we have, for all $\nu \in \mathscr{P}_{2}(\mathbb{R}^{d})$,
\begingroup
\begin{align*}\allowdisplaybreaks
    & 2 \langle b(x,\nu),x\rangle + \|\bm\sigma(x,\nu)\|^{2}  = 2\langle -\lambda x + \lambda\,\kappa \, \mathfrak{m}_{\alpha}(\nu), x \rangle + \text{trace}(\bm{\sigma}\bm{\sigma}(x,\mu)^{\top})\\
    & \quad \quad = -2 \lambda |x|^{2} + 2\lambda \,\kappa\, \langle x, \mathfrak{m}_{\alpha}(\nu) \rangle + \sum\limits_{i=1}^{d}\left( \sigma\,\delta + |\{x-\kappa\, \mathfrak{m}_{\alpha}(\nu)\}_{i}| \right)^{2}\\
    & \quad \quad \leq -2 \lambda |x|^{2} + \lambda\,\kappa \left(|x|^{2} + |\mathfrak{m}_{\alpha}(\nu)|^{2}\right) + 2\,d\,\left(\sigma\,\delta\right)^{2} + 2 |x-\kappa\,\mathfrak{m}_{\alpha}(\nu)|^{2}\\
    & \quad \quad \leq -2 \lambda |x|^{2} + \lambda\,\kappa \left(|x|^{2} + |\mathfrak{m}_{\alpha}(\nu)|^{2}\right) + 2\,d\,\left(\sigma\,\delta\right)^{2} + 4 |x|^{2} + 4\,\kappa^{2}\,|\mathfrak{m}_{\alpha}(\nu)|^{2}\\
    & \quad \quad \leq (-2\lambda + \lambda\,\kappa + 4)|x|^{2} + (\lambda\,\kappa + 4\,\kappa^{2})|\mathfrak{m}_{\alpha}(\nu)|^{2} + 2\,d\,(\sigma\,\delta)^{2}\\
    & \quad \quad \leq -(2\lambda - \lambda\,\kappa - 4)|x|^{2} + (\lambda\,\kappa + 4\,\kappa^{2})|\mathfrak{m}_{\alpha}(\nu)|^{2} + 2\,d\,(\sigma\,\delta)^{2}
\end{align*}
\endgroup
Using Lemma \ref{lem: useful estimates}, we can bound $|\mathfrak{m}_{\alpha}(\nu)|^{2}$ as in \eqref{lemeq2} and obtain
\begin{equation}\label{H1}
\begin{aligned}
    & 2 \langle b(x,\nu),x\rangle + \|\bm\sigma(x,\nu)\|^{2} \\
    & \quad  \leq -(2\lambda - \lambda\,\kappa - 4)|x|^{2} + (\lambda\,\kappa + 4\,\kappa^{2})[b_1 + b_2 \nu(|\cdot|^{2})] + 2\,d\,(\sigma\,\delta)^{2}\\
    & \quad  \leq -(2\lambda - \lambda\,\kappa - 4)|x|^{2} + (\lambda\,\kappa+4\,\kappa^{2})\,b_{2}\,\nu(|\cdot|^{2}) + 2\,d\,(\sigma\,\delta)^{2} + b_{1}\,(\lambda\,\kappa+4\,\kappa^{2})\\
    & \quad  = -\widetilde{C}_{1}\, |x|^{2} + \widetilde{C}_{2} + \widetilde{C}_{3}\,\nu(|\cdot|^{2})
\end{aligned}
\end{equation}
where
\begin{equation*}
\begin{aligned}
    \widetilde{C}_{1}  = 2\lambda - \lambda\,\kappa - 4, \qquad
    \widetilde{C}_{2}  = 2\,d\,(\sigma\,\delta)^{2} + b_{1}\,(\lambda\,\kappa+4\,\kappa^{2}), \qquad 
    \widetilde{C}_{3}  = (\lambda\,\kappa+4\,\kappa^{2})\,b_{2}.
\end{aligned}
\end{equation*}
We need to verify that $\widetilde{C}_{1}>0$ and $\widetilde{C}_{1} > \widetilde{C}_{3}$. Observe that $\widetilde{C}_{3}  = \kappa\,(\lambda+4\,\kappa)\,b_{2} = \mathcal{O}(\kappa)$ which means that it can be made arbitrarily small when $0<\kappa\ll 1$. We have 
\begin{equation}\label{equiv C1 C3}
\begin{aligned}
    \widetilde{C}_{1} > \widetilde{C}_{3} \; \Leftrightarrow\; 2\lambda - \lambda\,\kappa - 4 
     > (\lambda\,\kappa+4\,\kappa^{2})\,b_{2}
    \; \Leftrightarrow\; 2\lambda - 4  > \lambda\,\kappa\,(1+b_2) + 4\,\kappa^{2} b_2
\end{aligned}
\end{equation}
Then, \textbf{sufficient} conditions are for example $\kappa < \frac{1}{2}(1+b_2)^{-1} \; \text{ and } \; \lambda >4$, where $b_2$ is as defined in \eqref{constant b1 b2}. Indeed in this case, recalling $\kappa\in (0,1)$, one has
\begin{equation*}
\begin{aligned}
    & \left(\lambda\,\kappa (1+b_2) < \frac{\lambda}{2}\right) \;\; 
    \text{ and } \;\;  \bigg(\kappa\,b_2 < 1  \Rightarrow\; 4\,\kappa^{2}\,b_2 < 2\bigg)
    \; \Rightarrow\;  \lambda\,\kappa (1+b_2) + 4\kappa^{2}\, b_2 < \frac{\lambda}{2} + 2
\end{aligned}
\end{equation*}
Finally, $\lambda>4 \Leftrightarrow 2\lambda - 4 > \frac{\lambda}{2} + 2$ together with the previous inequality guarantee \eqref{equiv C1 C3}. Moreover, $\lambda>4$ together with $\kappa\in (0,1)$ ensure $\widetilde{C}_1 >0$.

\textbullet\; Assumption \cite[(H2.i)]{zhang2023existence}\\
For $\nu$ fixed in $\mathscr{P}_{2}(\mathbb{R}^{d})$, we need to check that the drift and diffusion terms are locally Lipschitz, that is: \\
For every $n\in \mathbb{N}$ and $\nu \in \mathscr{P}_{2}(\mathbb{R}^{d})$, there exists $K_{n}>0$ such that for all $|x|\vee |y|\leq n$ we have
\begin{equation*}
    |b(x,\nu) - b(y,\nu)| + \|\bm\sigma(x,\nu) - \bm\sigma(y,\nu)\| \leq K_{n}|x-y|.
\end{equation*}
This is easily verified from the definition of $b$ and $\bm\sigma$ (in fact, $K_{n}=\lambda + \sigma$). 

\textbullet\; Assumption \cite[(H2.ii)]{zhang2023existence}\\
We need to check that the drift has a polynomial growth. More precisely we want to check that: there exists a locally bounded function $\mathfrak{h}:[0,+\infty)\to [0,+\infty)$ such that
\begin{equation*}
    |b(x,\nu)| \leq \mathfrak{h}(\nu(|\cdot|^{2}))\, (1+|x|),\quad x\in \mathbb{R}^{d},\; \nu \in \mathscr{P}_{2}(\mathbb{R}^{d}).
\end{equation*}
This holds true, noting that
\begingroup
\begin{align*}\allowdisplaybreaks
    |b(x,\nu)| & = \lambda|x- \kappa \,\mathfrak{m}_{\alpha}(\nu)|\leq \lambda\big(|x| + \kappa |\mathfrak{m}_{\alpha}(\mu)|\big)\\
    & \leq \lambda|x| + \lambda\kappa \bigg(b_1 + b_2 \,\nu(|\cdot|^{2})\bigg)^{\frac{1}{2}},\quad \text{using Lemma \ref{lem: useful estimates}}\\
    & \leq \lambda(1+|x|) + \lambda\bigg(b_1 + b_2 \,\nu(|\cdot|^{2})\bigg)^{\frac{1}{2}}(1+|x|), \quad \text{using $\kappa <1$}\\
    & \leq \mathfrak{h}(\nu(|\cdot|^{2}))\, \big(1 + |x| \big)
\end{align*}
\endgroup
where we have set $\mathfrak{h}(\xi) = \lambda \left(1+ (b_1 + b_2 \, \xi)^{\frac{1}{2}} \right)$ for every $\xi\in [0,+\infty)$.

\textbullet\; Assumption \cite[(H3)]{zhang2023existence}\\
We need the drift and diffusion coefficients to be continued on $\mathscr{P}_{2,R}(\mathbb{R}^{d})$ equipped with the Wasserstein metric. This is guaranteed thanks to Lemma \ref{lem: stab}.

\textbullet\; Assumption \cite[(H4)]{zhang2023existence}\\
We need the diffusion matrix to be non-degenerate on $\mathbb{R}^{d}\times \mathscr{P}_{2}(\mathbb{R}^{d})$ , that is
\begin{equation*}
    \bm\sigma(x,\nu)\bm\sigma^{*}(x,\nu)>0,\quad x\in \mathbb{R}^{d},\; \nu \in \mathscr{P}_{2}(\mathbb{R}^{d}).
\end{equation*}
This is guaranteed thanks to the additional term $\delta\,\mathds{I}_{d}$ in the definition of $\bm{\sigma}$, noting that $D(\cdot)$ is a non-negative matrix. Indeed, the matrix $\bm{\sigma}$ is a diagonal matrix whose diagonal entries satisfy $\frac{\sigma^2}{2}(\delta+|(x-\kappa\,\mathfrak{m}_{\alpha}(\mu))_i|)^2 \geq \frac{(\sigma\delta)^{2}}{2}>0$.

\textbullet\; Conclusion:\\
The latter assumptions being satisfied, we can then apply  \cite[Theorem 2.2]{zhang2023existence}, which guarantees that the DDSDE \eqref{CBO appendix} has a stationary distribution. In fact, from the proof in \cite{zhang2023existence} (see the last line in page 8, and  the lines between equation (2.22) and equation (2.23) in page 10),  there exists $R_{0}>0$ depending only on the constants of the problem ($\lambda, \kappa, \sigma, \delta$, together with $b_{1}, b_2$ defined by \eqref{constant b1 b2} in Lemma \ref{lem: useful estimates}) such that the stationary distribution exists in $\mathscr{P}_{2,R}(\mathbb{R}^{d})$, and $R\geq R_{0}$ is determined by the initial distribution. 

The precise conditions that the parameters need to satisfy in order to guarantee existence of an invariant measure are ruled by the inequality $\tilde{C_{1}} > \tilde{C}_{3}$ that is \eqref{equiv C1 C3}. An example of sufficient conditions are $\kappa < \frac{1}{2(1+b_2)} \;\text{ and } \; \lambda >4\;$ where we recall $b_2$ is as defined in \eqref{constant b1 b2}.

\bibliographystyle{amsxport}
\bibliography{bibliography}

\end{document}